\newtheorem{thm}{Theorem}[section]
\newtheorem{lem}[thm]{Lemma}
\newtheorem{prop}[thm]{Proposition}
\theoremstyle{definition}
\theoremstyle{remark}
\newtheorem{rem}[thm]{Remark}
\numberwithin{equation}{section}
\newcommand{\R}{\mathbb R}
\newcommand{\N}{\mathbb N}
\newcommand{\x}{\tilde{x}}
\newcommand{\ut}{\widetilde{u}}
\newcommand{\tpsi}{\widetilde{\psi}}
\newcommand{\gfls}{(-\Delta_g)^s}
\newcommand{\ve}{\varepsilon}
\newcommand{\lam}{\lambda}
\title[Maximum principles, Liouville theorem and symmetry results]{Maximum principles, Liouville theorem and symmetry results for the fractional $g-$Laplacian}
\author{Sandra Molina, Ariel Salort and Hern\'an Vivas}
\address[SM]{Centro Marplatense de Investigaciones matem\'aticas, CIC-UNMdP, Mar del Plata, Argentina}
\email{smolina@mdp.edu.ar}
\address[AS]{Instituto de C\'alculo, UBA-CONICET, Buenos Aires, Argentina \\
Departamento de Matem\'atica, FCEN - Universidad de Buenos Aires\\
Ciudad Universitaria, Pabell\'on I, C1428EGA, Av. Cantilo s/n\\
Buenos Aires, Argentina}
\email{asalort@dm.uba.ar}
\address[HV]{Instituto de C\'alculo, UBA-CONICET, Buenos Aires, Argentina \\
Centro Marplatense de Investigaciones matem\'aticas, CIC-UNMdP, Mar del Plata, Argentina}
\email{havivas@mdp.edu.ar}
\begin{document}

\subjclass[2010]{35J62; 35B65}

\keywords{Fractional $g-$Laplacian; maximum principles; qualitative properties}

\begin{abstract}
We study different maximum principles for non-local non-linear operators with non-standard growth that arise naturally in the context of fractional Orlicz-Sobolev spaces and whose most notable representative is the fractional $g-$Laplacian:
\[
\gfls u(x):=\textrm{p.v.}\int_{\R^n}g\left(\frac{u(x)-u(y)}{|x-y|^s}\right)\frac{dy}{|x-y|^{n+s}},
\]
being $g$ the derivative of a Young function. 

We further derive qualitative properties of solutions such as a Liouville type theorem and symmetry results and present several possible extensions and some interesting open questions. These are the first results of this type proved in this setting. 
\end{abstract}

\maketitle
\tableofcontents
 
\section{Introduction and main results}

The aim of this manuscript is to study qualitative properties of the so-called fractional $g-$Laplacian; for $s\in(0,1)$ the fractional $g-$Laplacian is defined by
\[
\gfls u(x):=\textrm{p.v.}\int_{\R^n}g\left(\frac{u(x)-u(y)}{|x-y|^s}\right)\frac{dy}{|x-y|^{n+s}}
\]
where p.v. stands for the principal value and $g=G'$ is the derivative of a Young function $G$ (for this and other definitions see Section \ref{sec.prel}).  

This operator was introduced in \cite{FBS} and has received an increasing attention in the last years since it allows to model non-local problems obeying a non-power behavior. See for instance \cite{ACPS, ACPS2, BO, BS, bondersalortvivas, DNFBS, SV, S} and the references therein.

We will be particularly interested in different versions of  \emph{maximum principles} for the fractional $g-$Laplacian, from where many  qualitative properties of solutions will be obtained.

The literature on maximum principles and the consequential qualitative properties of solutions (such as symmetry, for instance) is nowadays huge, and different techniques were developed in order to overcome technical difficulties arisen by the particular nature of the operators under study. For instance, the square power case (i.e. $G(t)=t^2$) corresponds with the \emph{fractional Laplacian}, and several  tools such as representation formulas for solutions, the Fourier transform or the Caffarelli-Silvestre  extension method  have shown to be useful, and a series of successful results have been obtained (see \cite{CZ, CHL, MN} and the references therein). However, when $G(t)=t^p$ 
(the well-known  \emph{fractional $p-$Laplacian}) these effective techinques no longer work  due to the nonlinearity of the operator and new techniques and ideas need to be developed. Several results regarding maximum principles and qualitative properties of solutions were proved in \cite{CL, CW, WRBH, WL, ZH,  ZAW}, just to mention some recent works. Furthermore, the method of moving planes introduced in the celebrated paper by Gidas, Ni  and Nirenberg \cite{GNN}, or the sliding method developed by Berestycki and Nirenberg \cite{BN,BN2} provide a flexible alternative to approach symmetry and related issues, and have been adapted to the nonlocal setting in the upper cited papers, among others.

In this manuscript we have as main goal to introduce several formulations of the maximum principle for the fractional $g-$Laplacian, from where we will deduce some interesting qualitative results such as a Liouville type theorem, or symmetry of solutions in a ball. 

The novelty of our results is that they can be applied to non-local operators admitting behaviors more general than powers such as $G(t)=t^p\log(1+t)$, $p\geq 2$, or models related to double phase problems where $G(t)=t^p + t^q$, where $p,q\geq 2$. To the best of the authors' knowledge, these are the first results of this kind available in the literature for non-standard growth models.

We further highlight that the possible lack of homogeneity of $G$ will be one of the main obstacles to overcome, and  leads us to develop specific tools for this setting. 


Throughout the paper, solutions of equations involving the fractional $g-$Laplacian will be assumed to be of class $C^{1,1}_{\text{loc}}(\R^n)\cap L_g$, being $L_g$ the tail space defined as
$$
L_g:=\left\{u\in L^1_{\text{loc}}(\R^n) :\int_{\R^n}g\left(\frac{|u(x)|}{1+|x|^s}\right)\frac{dx}{1+|x|^{n+s}}<\infty\right\}.
$$
That regularity ensures the operator to be well-posed, see Lemma \ref{puntual}.

Our first result is a rather standard maximum principle for the fractional $g-$Laplacian:
\begin{thm}[Maximum principle on domains] \label{teo0}
Let $\Omega$ be a bounded domain in $\R^n$. Assume that  $u\in C^{1,1}_{\text{loc}}(\R^n)\cap L_g$ and satisfies
\begin{align*}
\begin{cases}
\gfls u(x)\geq 0 &\text{ if } x\in\Omega\\
u(x)\geq 0 &\text{ if } x\in\R^n\setminus \Omega.
\end{cases}
\end{align*}
Then $u\geq 0$ in $\Omega$. Moreover, if $u(x)=0$ at some point $x\in\Omega$, then $u\equiv 0$ in $\R^n$.
\end{thm}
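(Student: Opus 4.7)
The plan is to argue by contradiction, exploiting the sign information carried by $g$ at a global minimum point. Suppose $u(x)<0$ at some point of $\Omega$. Since $u$ is continuous on the compact set $\overline{\Omega}$ and $u\geq 0$ on $\partial\Omega\subset \R^n\setminus\Omega$, $u$ attains a negative global minimum at some $x_0\in\Omega$; in particular $u(x_0)\leq u(y)$ for every $y\in\R^n$, and $u(x_0)<0\leq u(y)$ for every $y\in\R^n\setminus\Omega$.

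The next step is to evaluate $\gfls u(x_0)$ pointwise via the representation guaranteed by Lemma \ref{puntual}. The principal value poses no extra difficulty because the integrand is single-signed at $x_0$: since $g=G'$ is the derivative of a Young function, $g$ is odd, non-decreasing, and vanishes only at $0$. As $u(x_0)-u(y)\leq 0$ for every $y\in\R^n$, one has
\[
g\!\left(\frac{u(x_0)-u(y)}{|x_0-y|^s}\right)\leq 0 \qquad \text{for all } y\in\R^n,
\]
and on the unbounded set $\R^n\setminus\Omega$ we have $u(x_0)-u(y)\leq u(x_0)<0$, so the inequality is strict there. Integrating against the positive kernel $|x_0-y|^{-(n+s)}$ yields $\gfls u(x_0)<0$, contradicting the hypothesis $\gfls u(x_0)\geq 0$. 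Hence $u\geq 0$ in $\Omega$.

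For the strong statement, assume $u(x_1)=0$ for some $x_1\in\Omega$. Then $x_1$ is a global minimum of $u$ on $\R^n$, so the computation above gives $\gfls u(x_1)\leq 0$; combined with the reverse inequality from the hypothesis we obtain
\[
\int_{\R^n} g\!\left(\frac{u(x_1)-u(y)}{|x_1-y|^s}\right)\frac{dy}{|x_1-y|^{n+s}}=0.
\]
Since the integrand is non-positive almost everywhere and $g(t)=0$ only at $t=0$, we conclude $u(y)=u(x_1)=0$ for a.e.\ $y\in\R^n$, and by continuity $u\equiv 0$ on $\R^n$.

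The only delicate points are the pointwise validity of the formula for $\gfls u$ at the minimum (handled by Lemma \ref{puntual}, with the single-signed integrand making the principal value superfluous) and the fact that $g$ vanishes only at $0$, which is used in the strong part and is a standard consequence of the definition of a Young function. Once the sign of the integrand has been identified, the rest is essentially a one-line contradiction.
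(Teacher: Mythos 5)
Your argument is correct and follows essentially the same route as the paper: evaluate $\gfls u$ pointwise (via Lemma \ref{puntual}) at a negative minimum, use the sign of $g$ and the exterior condition to force $\gfls u<0$ there, and in the equality case conclude $u\equiv 0$ from the vanishing of a single-signed integral together with $g(t)=0$ only at $t=0$. If anything, your placement of the strict inequality on the contribution from $\R^n\setminus\Omega$ (where $u(x_0)-u(y)\leq u(x_0)<0$ on a set of positive measure) is slightly cleaner than the paper's, which asserts strictness when discarding the integral over $\Omega$.
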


In our next result we prove that if  $u$ is a bounded $g-$harmonic function, then it is symmetric about any given hyper-plane in $\R^n$
and hence it must be constant:

\begin{thm}[Liouville] \label{liou}
Let $u\in C^{1,1}_{\text{loc}}(\R^n)\cap L_g$ satisfying
\[
\gfls u =0\quad\text{ in }\R^n.
\]
If $u$ is bounded, then $u$ is constant in $\R^n$. 
\end{thm}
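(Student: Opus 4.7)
The plan is to establish the Liouville property via the following symmetry principle: I will prove that $u$ is symmetric with respect to every affine hyperplane in $\R^n$, and then observe that a function which is symmetric about every hyperplane perpendicular to a coordinate direction must be constant along that direction, so iterating in $e_1,\dots,e_n$ forces $u$ to be constant. The technical engine is the method of moving planes adapted to the non-local, non-homogeneous operator $\gfls$.

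Fix a direction $e_1$ and, for $\lambda\in\R$, set $\Sigma_\lambda=\{x_1<\lambda\}$, $x^\lambda=(2\lambda-x_1,x_2,\dots,x_n)$, $u_\lambda(x)=u(x^\lambda)$, and $w_\lambda(x)=u_\lambda(x)-u(x)$, which is antisymmetric with respect to $T_\lambda=\{x_1=\lambda\}$. Since the reflection $x\mapsto x^\lambda$ is an isometry, a change of variables in the defining integral of $\gfls$ yields $\gfls u_\lambda(x)=\gfls u(x^\lambda)=0$. Writing $\gfls u_\lambda(x)-\gfls u(x)=0$ and splitting each integral into the pieces over $\Sigma_\lambda$ and over $\Sigma_\lambda^c$, performing the change of variables $y\mapsto y^\lambda$ in the second pieces, and applying the mean value theorem to $g$, one obtains the linearised identity
\[
0=\int_{\Sigma_\lambda}\Big[g'(\xi_1)\frac{w_\lambda(x)-w_\lambda(y)}{|x-y|^{n+2s}}+g'(\xi_2)\frac{w_\lambda(x)+w_\lambda(y)}{|x-y^\lambda|^{n+2s}}\Big]\,dy
\]
for every $x\in\Sigma_\lambda$, where $\xi_1,\xi_2$ are intermediate values and both coefficients $g'(\xi_i)$ are non-negative because $g=G'$ is monotone. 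This is the analogue, in the present non-homogeneous setting, of the decomposition which would give $(-\Delta)^sw_\lambda(x)$ in the fractional Laplacian case.

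The aim is to show $w_\lambda\geq 0$ on $\Sigma_\lambda$; the symmetric argument then yields $w_\lambda\leq 0$ and hence symmetry about $T_\lambda$. Suppose by contradiction $\inf_{\Sigma_\lambda}w_\lambda=-m<0$. Since $u$ is only bounded (no decay), the infimum need not be attained, so pick a minimising sequence $(x_k)$ and translate $u$ by $-(0,x_k')$, an operation which preserves the equation, the reflection structure and the $L^\infty$ bound. Combining $\|u\|_\infty<\infty$ with the $C^{1,1}_{\text{loc}}$ regularity and Arzelà--Ascoli, extract a locally uniformly convergent subsequence with limit $\bar u$ that is still bounded and $g$-harmonic on $\R^n$, and whose $\bar w_\lambda$ attains a strictly negative global minimum at some finite point $\bar x$; by antisymmetry (swapping $u\leftrightarrow -u$, allowed since $G$ is even and $g$ odd, if needed) one may assume $\bar x\in\Sigma_\lambda$. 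Evaluating the identity at $\bar x$, for every $y\in\Sigma_\lambda$ the first integrand is $\leq 0$ since $\bar x$ is the minimum, while $\bar w_\lambda(\bar x)+\bar w_\lambda(y)=\bar w_\lambda(\bar x)-\bar w_\lambda(y^\lambda)\leq 0$ because $\bar x$ is a \emph{global} minimum and $y^\lambda\in\Sigma_\lambda^c$. Both non-positive integrands sum to zero, so each vanishes a.e., forcing $\bar w_\lambda$ to be constant on $\Sigma_\lambda$; since $\bar w_\lambda|_{T_\lambda}=0$, that constant is $0$, contradicting $\bar w_\lambda(\bar x)<0$.

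The main obstacle is precisely the lack of decay of $u$: classical moving planes either start the planes at infinity using decay or work on a bounded domain where extrema are attained, and neither applies here, so the translated-limit device is essential and relies on uniform regularity for bounded $g$-harmonic functions. A second, more structural, difficulty is the non-homogeneity of $g$: the weights $g'(\xi_i)$ in the linearised identity are not constant and may degenerate where $u(x)=u(y)$, so the a.e.\ vanishing step must either use pointwise positivity of $g'$ off the diagonal or be replaced by an appeal to an antisymmetric maximum principle for $\gfls$ with bounded measurable kernels, as developed elsewhere in the paper.
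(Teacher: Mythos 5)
Your high-level plan (symmetry about every hyperplane, hence constancy) is the same as the paper's, but the engine you propose breaks down exactly where the paper works hardest. The contradiction step rests on producing a \emph{finite} point where $\bar w_\lambda$ attains a strictly negative minimum, via translations plus Arzel\`a--Ascoli, and this has two genuine gaps. First, the compactness is not available from the hypotheses: $u\in C^{1,1}_{\text{loc}}(\R^n)$ gives no bound on the $C^{1,1}$ (or even $C^{2s+\delta}$) seminorms of the translates $u(\cdot+(0,x_k'))$ on a fixed ball, because these are the seminorms of $u$ on balls around $x_k$, which may degenerate as $k\to\infty$; to extract a limit and, crucially, to pass the pointwise identity $\gfls\bar u(\bar x^{\lambda})-\gfls\bar u(\bar x)=0$ to that limit, you would need uniform interior estimates for bounded $g$-harmonic functions, which are neither proved nor cited in this paper (and avoiding precisely this need is the point of the paper's Theorem \ref{maxpplehyp}). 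Second, even granting compactness, translating only tangentially to $T_\lambda$ does not force attainment: the minimizing sequence may have $x_{k,1}\to-\infty$, so the recentered points still escape to infinity inside $\Sigma_\lambda$; recentering in the normal direction is not allowed, since then $x_k^{\lambda}-x_k\to\infty$ and the values $u(x_k)$ and $u(x_k^{\lambda})$ converge to values of two \emph{different} limit functions, destroying the reflection/antisymmetry structure your identity relies on. There is also a sign issue you only gesture at: for the second integrand you need $\bar w_\lambda(\bar x)+\bar w_\lambda(y)\le 0$, i.e.\ that $\bar x$ minimizes $\bar w_\lambda$ over all of $\R^n$, not merely over $\Sigma_\lambda$; replacing $u$ by $-u$ (which flips $w_\lambda$) can be arranged to give this, but the reduction must be spelled out, and note that the standard homogeneous-case alternative of merging the two kernels via $|x-y|\le|x-y^\lambda|$ is unavailable here precisely because $g'(\xi_1)\neq g'(\xi_2)$.

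For comparison, the paper's proof of Theorem \ref{liou} is a one-step application of Theorem \ref{maxpplehyp}: since $\gfls\ut-\gfls u=0$, the antisymmetric maximum principle on the half-space gives $w\le 0$, and by exchanging sides $w\ge 0$, so $u$ is symmetric about every hyperplane and hence constant. The proof of Theorem \ref{maxpplehyp} handles the non-attainment of the extremum not by compactness but by perturbing $w$ with $\varepsilon(\tpsi-\psi)$, where $\psi,\tpsi$ are reflected bumps centered at an almost-extremal point: the perturbed function attains its maximum in a fixed ball, Lemma \ref{lem.2} yields the quantitative lower bound $C_0\,g(A)$ for the difference of the two $g$-Laplacians there, and Lemma \ref{lem.3} yields the upper bound $C_\delta\varepsilon+\omega(\delta)$, so letting $\varepsilon\to0$ gives the contradiction with no translation limits and no regularity theory beyond the pointwise $C^{1,1}\cap L_g$ assumption. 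To repair your argument you would either have to supply the uniform estimates and deal with the normal-direction escape, or replace your contradiction step by an appeal to Theorem \ref{maxpplehyp} itself, which is exactly the paper's route.
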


The idea in order to  obtain symmetry of $u$ with respect to a given hyper-plane $H$ is to consider the function $w(x) = u(\tilde x)-u(x)$, where $\tilde x$ denotes for the reflection of $x$ with respect to $H$. If we can prove that $w(x)\leq 0$ in $H$, then interchanging the roles of $x$ and $\tilde x$, we could deduce that $w(x) \equiv 0$ in $H$, and therefore $u(x)$ would be symmetric with respect to the plane $H$. Since the fractional $g-$Laplacian is invariant under rotations and translations this gives that $u$ must be constant.

The aforementioned strategy is reached by means of the following maximum principle for antisymmetric functions on hyperplanes:
\begin{thm}[Maximum principle on hyperplanes] \label{maxpplehyp}
Let $H$ be a hyperplane in $\R^n$, $\Sigma$ the half space at one side of the plane and $\x$ be the reflection of $x$ across $H$. Let $u\in C^{1,1}_{\text{loc}}(\R^n)\cap L_g$ and define
\[
\ut(x):=u(\x),\quad\text{and}\quad w(x):=\ut(x)-u(x).
\]
Assume $w$ is bounded in $\Sigma$. If for any $x\in\Sigma$ such that $\ut(x)>u(x)$ we have
\begin{equation}\label{eq.ineq}
\gfls \ut(x)-\gfls u(x)\leq 0
\end{equation}
then
\begin{equation}\label{eq1}
w(x)\leq 0\text{ in }\Sigma.
\end{equation} 
\end{thm}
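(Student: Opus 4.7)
My approach is by contradiction. Suppose $w > 0$ somewhere in $\Sigma$, and set $M := \sup_\Sigma w$, which is finite and strictly positive by the boundedness hypothesis. The plan is to reduce to the case in which $M$ is attained at some interior point $x_0 \in \Sigma$, apply the hypothesis \eqref{eq.ineq} at $x_0$ to conclude $\gfls\ut(x_0)-\gfls u(x_0)\leq 0$, and then exhibit a contradiction by showing that this quantity is in fact strictly positive.

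\emph{Attainment of the maximum.} Since $\x=x$ on $H$, we have $w\equiv 0$ on $H=\partial\Sigma$, so no maximizing sequence for $w$ can accumulate on $H$. If a maximizing sequence $\{x_k\}\subset\Sigma$ with $w(x_k)\to M$ is bounded, continuity of $u$ (which is $C^{1,1}_{\text{loc}}$) yields $x_k\to x_0\in\Sigma$ with $w(x_0)=M$. If unbounded, I would use translations together with the tail-space condition $u\in L_g$ to reduce to the bounded case via a standard compactness argument in $C^{1,1}_{\text{loc}}$.

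\emph{Rewriting the operator difference.} At $x_0$, splitting $\int_{\R^n}=\int_\Sigma+\int_{\R^n\setminus\Sigma}$ and performing the substitution $y\mapsto\x$ in the second piece (noting $\ut(\x)=u(y)$ and the reflection isometry $|x_0-\x|=|\x_0-y|$), one obtains
\begin{align*}
\gfls\ut(x_0)-\gfls u(x_0) &= \int_\Sigma\big[g(A_1)-g(A_2)\big]\,\frac{dy}{|x_0-y|^{n+s}} \\
&\quad + \int_\Sigma\big[g(B_1)-g(B_2)\big]\,\frac{dy}{|\x_0-y|^{n+s}},
\end{align*}
where a direct computation gives
\[
A_1-A_2=\frac{M-w(y)}{|x_0-y|^s},\qquad B_1-B_2=\frac{M+w(y)}{|\x_0-y|^s}.
\]

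\emph{Closing the contradiction.} For every $y\in\Sigma$ the maximum property $w(y)\leq M$ forces $A_1\geq A_2$, and the monotonicity of $g$ (inherited from the convexity of $G$) makes the first integrand pointwise nonnegative. The second integrand is likewise nonnegative on $\{w\geq -M\}\cap\Sigma$, and strictly positive on a neighborhood of $x_0$ where $w(y)$ is close to $M$, producing a strictly positive lower bound. On the remaining region $\{w<-M\}\cap\Sigma$ the second integrand is negative, but one compensates using the geometric kernel comparison $|x_0-y|\leq|\x_0-y|$ (valid on $\Sigma$), so that the nonnegative first contribution is weighted by a larger kernel, together with the monotonicity of $g$ and the identity $|x_0-y|^s(A_1-A_2)+|\x_0-y|^s(B_1-B_2)=2M$. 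These yield $\gfls\ut(x_0)-\gfls u(x_0)>0$, contradicting the hypothesis and forcing $w\leq 0$ in $\Sigma$.

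\emph{Main obstacle.} The delicate step is the sign analysis on $\{w<-M\}\cap\Sigma$: in the linear case (fractional Laplacian) the total integrand is a simple linear combination whose nonnegativity is transparent, but for the nonlinear and possibly inhomogeneous $g$ one cannot add the two terms as scalars and must combine monotonicity with the geometric kernel comparison in a careful way — precisely the kind of difficulty emphasized in the introduction as stemming from the lack of homogeneity of $G$. A secondary subtlety is the treatment of the unbounded maximizing sequence in the attainment step, which crucially relies on the tail-space condition $u\in L_g$.
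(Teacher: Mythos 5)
Your plan hinges on the supremum $M=\sup_\Sigma w$ being attained at some $x_0\in\Sigma$, and that is a genuine gap: nothing in the hypotheses gives attainment, and your fallback (``translations plus $u\in L_g$ give a standard compactness argument in $C^{1,1}_{\text{loc}}$'') does not work. Only $w$ is assumed bounded, not $u$; along a maximizing sequence escaping to infinity you have no uniform local bounds or equicontinuity for the translates of $u$, the tail condition $L_g$ is not translation-stable in any useful way, and even if a limit existed, the hypothesis \eqref{eq.ineq} is a pointwise inequality for the original $u$ and does not pass to a limit of translates without a stability theory for $\gfls$ that is neither assumed nor established. The paper's proof is built precisely to avoid attainment: it takes a point $\bar x$ with $w(\bar x)\geq\gamma A$, perturbs by a compactly supported antisymmetric bump $\ve(\tpsi-\psi)$ (Lemma \ref{lem.1}) so that the perturbed difference does attain its maximum in $\overline{B_1(\bar x)}$, controls the effect of the perturbation on the operator by Lemma \ref{lem.3} ($|\gfls(u+\ve\psi)-\gfls u|\leq C_\delta\ve+\omega(\delta)$), and — crucially — derives a \emph{quantitative} lower bound $\gfls(\ut+\ve\tpsi)-\gfls(u+\ve\psi)\geq 2C_0\,g(A)$ at the perturbed maximum via Lemma \ref{lem.2}, which beats the error $2(C_\delta\ve+\omega(\delta))$ for $\delta,\ve$ small. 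Mere strict positivity at an (assumed) exact maximum, which is all your scheme aims for, cannot be run through this perturbation mechanism; you need a lower bound of the type $c\,g(M)$.

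The second step is also only asserted, not proved, and it is exactly the nonlinear heart of the matter. On $\{w<-M\}\cap\Sigma$ you invoke ``monotonicity plus the kernel comparison'' and the identity $|x_0-y|^s(A_1-A_2)+|\x_0-y|^s(B_1-B_2)=2M$, but for inhomogeneous $g$ this identity gives nothing by itself: the two differences $g(A_1)-g(A_2)$ and $g(B_1)-g(B_2)$ sit inside $g$ with \emph{different} denominators and are weighted by different kernels, so they cannot be added as scalars. A correct pointwise argument does exist at an attained maximum (for instance, writing $F(t)=g(t/|x_0-y|^s)|x_0-y|^{-(n+s)}$ and $G(t)=g(t/|\x_0-y|^s)|\x_0-y|^{-(n+s)}$ and using that $F-G$ is odd and nondecreasing, or, as the paper does, pairing the terms that share the same subtrahend in the numerator and applying Lemma \ref{lem.2}), but as written your proposal leaves this step as a placeholder. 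Combined with the missing attainment step and the absence of a quantitative lower bound, the proposal does not yet constitute a proof; the paper's route (near-maximum point, antisymmetric cutoff perturbation, Lemmas \ref{lem.1}--\ref{lem.3}) is the machinery you would need to supply.
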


Theorems like \ref{liou} are often generalized to allow some growth at infinity on the function $u$; indeed, the classical Liouville theorem for harmonic functions states that 
\begin{equation}\label{eq.liouh}
\Delta u=0 \quad \textrm{in}\quad\R^n\quad\textrm{and}\quad u=O(|x|^k)\quad\textrm{as}\quad |x|\rightarrow\infty
\end{equation}
imply that $u$ is a polynomial of order at most $k$. Even if the techniques displayed here do not seem to be adaptable to get a result under assumptions similar to \eqref{eq.liouh}, the problem is interesting and worth pointing out.

We will also be interested in studying nonlinear equations of the form
\[
\gfls u=f(u)
\]
under suitable assumptions on the nonlinearity $f$. The classical method in this scenario is the method of moving planes; before stating the results we introduce some notation (which is fairly standard): let $\lambda\in\R$ and
$$
T_\lam:=\{x\in\R^n\colon x_n=\lam \text{ for } \lam \in \R\}
$$
be the hyperplane at height $\lambda$; let
\[
\Sigma_\lam := \{x\in\R^n\colon x_n<\lam\}
\]
be the upper half-space. For each $x\in \Sigma_\lam$ let
$$
x^\lam:=(x_1,x_2,\cdots, 2\lam-x_n)
$$
be its reflection about the plane $T_\lam$. Finally, we will denote 
\[
w_\lam(x):=u(x^\lam)-u(x)
\]
(notice that $w_\lam$ is anti-symmetric),
 and 
\begin{equation}\label{eq.lam0}
\lam_0:=\sup\{\lam\leq0:w_\mu\geq 0\text{ in }\Sigma_\mu\text{ for any }\mu\leq \lam\}.
\end{equation}
This notation will be used throughout the paper.


The first step for the moving planes technique is to provide for a starting point to move the plane: for $\lam$ sufficiently negative, it must be showed that $w_\lam(x)\geq 0$ in $\Sigma_\lam$. This can be ensured by using the following maximum principle for anti-symmetric functions in bounded domains:
\begin{thm}[Maximum principle on bounded domains in hyperplanes] \label{maxpplehypbdd}


Let $T_\lambda,\:\Sigma_\lambda,w_\lambda$ be defined as above, $u\in C^{1,1}_{\text{loc}}(\R^n)\cap L_g$.

If 
\[
\left\{
\begin{array}{cc}
\gfls u (x^\lam)-\gfls u(x)\geq 0 & \text{ in }\Omega \\
w_\lam\geq 0 & \text{ in }\Sigma_\lam\setminus\Omega
\end{array}
\right.
\]
then
\[
w_\lam(x)\geq 0\text{ in }\Sigma_\lam.
\] 
Moreover, if $w_\lam=0$ at some point in $\Omega$, then $w_\lam\equiv 0$ in $\R^n$.

Moreover, the result holds true for unbounded domains if we further assume that $w_\lam(x)\geq 0$ as $|x|\to\infty$.
\end{thm}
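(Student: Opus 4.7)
The plan is to argue by contradiction in the spirit of the moving planes method, reducing the theorem to a pointwise operator estimate at a (would-be) negative minimum of $w_\lam$.

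Suppose for contradiction that $w_\lam<0$ at some point of $\Sigma_\lam$. Since $w_\lam\equiv 0$ on the hyperplane $T_\lam$ by antisymmetry, $w_\lam\geq 0$ on $\Sigma_\lam\setminus\Omega$ by hypothesis, and $w_\lam$ is continuous on $\R^n$ (as $u\in C^{1,1}_{\text{loc}}$), the negative infimum of $w_\lam$ on $\Sigma_\lam$ must be attained on the compact set $\overline{\Omega\cap\Sigma_\lam}$; this is where boundedness of $\Omega$ enters. Boundary and antisymmetry considerations then force the minimum $x_0$ to lie in $\Omega\cap\Sigma_\lam$, with $m:=w_\lam(x_0)<0$. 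I would then evaluate the operator difference $\gfls u(x_0^\lam)-\gfls u(x_0)$ by performing the change of variable $y\mapsto y^\lam$ in the integral defining $\gfls u(x_0^\lam)$ (relying on $|x_0^\lam-y^\lam|=|x_0-y|$), splitting the integration domain into $\Sigma_\lam$ and $\Sigma_\lam^c$, and applying an additional reflection on the outside piece. This produces an identity of the form
\[
\gfls u(x_0^\lam)-\gfls u(x_0)=\int_{\Sigma_\lam}\bigl[\phi_1(z)+\phi_2(z)\bigr]\,dz,
\]
with $\phi_1$ built from the same-side kernel $|x_0-z|^{-n-s}$ and a $g$-increment carrying the quantity $w_\lam(x_0)-w_\lam(z)$, and $\phi_2$ built from the cross-side kernel $|x_0-z^\lam|^{-n-s}$ and a $g$-increment carrying $w_\lam(x_0)+w_\lam(z)$.

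The sign analysis is the crux. By definition of $x_0$, $w_\lam(x_0)-w_\lam(z)\leq 0$ on $\Sigma_\lam$, so monotonicity of $g$ directly yields $\phi_1\leq 0$. The hard term is $\phi_2$, because the sign of $w_\lam(x_0)+w_\lam(z)$ is not controlled on $\Sigma_\lam$ (it fails to be non-positive wherever $w_\lam(z)>|m|$). The strategy is to exploit the strict inequality $|x_0-z|<|x_0-z^\lam|$ valid for every $z\in\Sigma_\lam$, together with the convexity of the Young function $G$, to compare the two weighted $g$-increments and conclude the pointwise estimate $\phi_1(z)+\phi_2(z)\leq 0$, with strict inequality in a neighborhood of $x_0$ (where the expression reduces, to leading order, to the linear-type bound $-2|m|\,g'(\xi)|x_0-z^\lam|^{-n-2s}$ familiar from the classical fractional-Laplacian computation). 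Integrating then gives $\gfls u(x_0^\lam)-\gfls u(x_0)<0$, contradicting the standing hypothesis at $x=x_0$ and yielding $w_\lam\geq 0$ on $\Sigma_\lam$.

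The strong part follows from the same computation applied at a zero $x_0\in\Omega\cap\Sigma_\lam$ of $w_\lam$: pointwise non-positivity of $\phi_1+\phi_2$ combined with the integral being non-negative forces $\phi_1+\phi_2\equiv 0$ a.e.\ on $\Sigma_\lam$, and strict monotonicity of $g$ then forces $w_\lam\equiv 0$ on $\Sigma_\lam$, hence on $\R^n$ by antisymmetry. For the unbounded-domain extension, the hypothesis $w_\lam\geq 0$ at infinity replaces the compactness of $\overline{\Omega\cap\Sigma_\lam}$ to ensure the infimum is attained at an interior point, and the argument proceeds verbatim. The main obstacle is precisely the pointwise bound on $\phi_1+\phi_2$: in the linear fractional-Laplacian case one simply factors $w_\lam(z)$ against the negative kernel $|x_0-z^\lam|^{-n-2s}-|x_0-z|^{-n-2s}$ and absorbs it using $w_\lam(z)\geq w_\lam(x_0)$, but the inhomogeneity of $G$ prevents such a clean factorization and forces instead a careful convexity argument tailored to the structure of $G$.
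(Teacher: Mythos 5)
Your skeleton is sound and begins exactly as the paper does: contradiction at a negative minimum $x_0\in\Omega$ (forced into the interior because $w_\lam\geq 0$ on $\Sigma_\lam\setminus\Omega$ and $w_\lam=0$ on $T_\lam$), the change of variables $y\mapsto y^\lam$, and the splitting of $\gfls u(x_0^\lam)-\gfls u(x_0)$ into a same-side piece carrying $w_\lam(x_0)-w_\lam(z)$ and a cross-side piece carrying $w_\lam(x_0)+w_\lam(z)$. Where you diverge from the paper is in how the uncontrolled cross term is tamed. The paper does not prove your pointwise bound $\phi_1+\phi_2\leq 0$; instead it regroups the integrand as $A(K_1-K_2)+(A+B)K_2$, where $A$ is the same-side $g$-increment and $K_1=|x_0-z|^{-n-s}>K_2=|x_0-z^\lam|^{-n-s}$: the first term is $\leq 0$ by minimality and monotonicity of $g$, and the second is written via the mean value theorem as $w_\lam(x_0)$ times an integral of $g'>0$, hence strictly negative. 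That factorization makes both the weak statement and the strong statement (the case $w_\lam(x_0)=0$) fall out at once, and it is what is re-used later in Proposition \ref{bdryest} and in the symmetry theorems, where a quantitative lower bound on the $g'$-integral is needed; your pointwise route gives the maximum principle but not that reusable factor.

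The genuine gap in your write-up is that the crux — the pointwise estimate $\phi_1(z)+\phi_2(z)\leq 0$ — is announced as a ``strategy'' rather than proved. It is in fact true, and provable with exactly the ingredients you name, so the gap is one of execution rather than a step that fails; here is how to close it. If the cross bracket is $\leq 0$ there is nothing to prove, since $\phi_1\leq 0$. If it is $>0$, write $a=u(x_0)-u(z)$, $a'=u(x_0^\lam)-u(z^\lam)\leq a$, $m=w_\lam(x_0)<0$, $d=|x_0-z|\leq D=|x_0-z^\lam|$, and note that the cross bracket compares $g$ at $\tfrac{a+m}{D^s}$ and $\tfrac{a'-m}{D^s}$, whose arguments lie in the subinterval $[\tfrac{a'-m}{D^s},\tfrac{a+m}{D^s}]\subset[\tfrac{a'}{D^s},\tfrac{a}{D^s}]$ because $m<0$; hence, $g$ being nondecreasing, it is dominated by $g(\tfrac{a}{D^s})-g(\tfrac{a'}{D^s})$. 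Then use the scaling inequality $g(\kappa\beta)-g(\kappa\alpha)\geq g(\beta)-g(\alpha)$ for $\kappa\geq1$, $\alpha\leq\beta$ (immediate from $g(\kappa\beta)-g(\kappa\alpha)=\int_\alpha^\beta\kappa g'(\kappa t)\,dt$ and the fact that $g'$ is even and nondecreasing on $[0,\infty)$) with $\kappa=(D/d)^s$ to get $g(\tfrac{a}{d^s})-g(\tfrac{a'}{d^s})\geq g(\tfrac{a}{D^s})-g(\tfrac{a'}{D^s})$, and combine with $d^{-n-s}\geq D^{-n-s}$ to conclude $|\phi_1|\geq\phi_2$. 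Strict negativity on a neighborhood of $x_0$ (where $w_\lam(z)+m<0$, so the cross bracket is strictly negative by strict monotonicity of $g$) then yields the contradiction, and your argument for the strong statement and for the unbounded case goes through as you describe. With that lemma-level argument supplied, your proof is complete and is a legitimate alternative to the paper's.
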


The second step consists in proving that $\lam_0=0$ which, applying the result to $w_{\lambda_0}$ and $-w_{\lambda_0}$, implies that $u$ is symmetric about the plane $\{x_n=0\}$. This can be proved by means of a contradiction argument: by assuming that $\lam_0<0$ we can construct a sequence $\lam_j \searrow \lam_0$, and $x_j\in \Sigma_{\lam_j}$ such that 
\[
w_{\lam_j}(x_j)=\min_{\Sigma_{\lam_j}} w_{\lam_j}\leq 0. 
\]  
We will show that such a sequence contradicts the following boundary estimate:


\begin{prop}[Boundary estimate] \label{bdryest}
Let $\lam_0$ be given by \eqref{eq.lam0} and assume it is finite and that $w_{\lam_0}>0$ in $\Sigma_{\lambda_0}$. Suppose that there exists a sequence $\lam_j\searrow\lam_0$ and $x_j\in\Sigma_{\lam_j}$ such that 
\[
w_{\lam_j}(x_j)=\min_{\Sigma_{\lam_j}} w_{\lam_j}\leq 0\quad\text{ and }\quad\lim_{j\rightarrow\infty}x_j=\bar x\in T_{\lam_0}.
\]
Let $\delta_j=\mathrm{dist}(x_j,T_{\lam_j})$. Then
\[
\limsup_{j\rightarrow\infty}\frac{1}{\delta_j}\left(\gfls u(x^{\lambda_j}_j)-\gfls u(x_j)\right)< 0.
\]
\end{prop}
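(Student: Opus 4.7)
The plan is to expand $\gfls u(x_j^{\lambda_j})-\gfls u(x_j)$ as an integral over the half-space $\Sigma_{\lambda_j}$ via the standard reflection trick, linearize the integrand using the integral form of the mean value theorem, and isolate a strictly negative leading contribution of order $\delta_j$ driven by the hypothesis $w_{\lambda_0}>0$ in $\Sigma_{\lambda_0}$.

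First, I split each of $\gfls u(x_j^{\lambda_j})$ and $\gfls u(x_j)$ into integrals over $\Sigma_{\lambda_j}$ and $\Sigma_{\lambda_j}^c$, and apply the change of variables $y=z^{\lambda_j}$ on $\Sigma_{\lambda_j}^c$ using $|x_j^{\lambda_j}-z^{\lambda_j}|=|x_j-z|$. Setting $p_y:=|x_j^{\lambda_j}-y|$ and $q_y:=|x_j-y|$ (so $p_y\ge q_y$ on $\Sigma_{\lambda_j}$ with $p_y^2-q_y^2 = 4\delta_j(\lambda_j-y_n)$) and pairing terms with matching denominators, I invoke the identity $g(\alpha)-g(\beta)=(\alpha-\beta)\int_0^1 g'(\beta+\theta(\alpha-\beta))\,d\theta$ to obtain the decomposition
\[
\gfls u(x_j^{\lambda_j})-\gfls u(x_j) = w_{\lambda_j}(x_j)\,A_j + B_j,
\]
where $A_j:=\int_{\Sigma_{\lambda_j}}[\Phi_1(y)/p_y^{n+2s}+\Phi_2(y)/q_y^{n+2s}]\,dy\ge 0$, $B_j:=\int_{\Sigma_{\lambda_j}} w_{\lambda_j}(y)[\Phi_1(y)/p_y^{n+2s}-\Phi_2(y)/q_y^{n+2s}]\,dy$, and $\Phi_1(y),\Phi_2(y)\ge 0$ are the corresponding integral mean-value coefficients. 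Since $w_{\lambda_j}(x_j)\le 0$ and $A_j\ge 0$, the first contribution is nonpositive, so it suffices to prove $\limsup_j B_j/\delta_j<0$.

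To extract the leading order in $\delta_j$, I rewrite the bracket inside $B_j$ as
\[
\frac{\Phi_1}{p_y^{n+2s}}-\frac{\Phi_2}{q_y^{n+2s}} = \Phi_1\!\left[\frac{1}{p_y^{n+2s}}-\frac{1}{q_y^{n+2s}}\right] + \frac{\Phi_1-\Phi_2}{q_y^{n+2s}}.
\]
The kernel difference expands, on compact sets, as $-2(n+2s)\delta_j(\lambda_j-y_n)/|x_j-y|^{n+2s+2}+o(\delta_j)$. A parallel estimate $\Phi_1-\Phi_2=O(\delta_j)$ follows from the continuity of $g'$, the $C^{1,1}_{\text{loc}}$ regularity of $u$, and the symmetry $\theta\mapsto 1-\theta$, which cancels the order-one pointwise contribution upon integration in $\theta$ leaving only an $O(\delta_j)$ remainder. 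Combined with the uniform positivity of $w_{\lambda_j}$ on any compact $K\subset\Sigma_{\lambda_0}$ for $j$ large (inherited from $w_{\lambda_0}>0$) and a dominated convergence argument, this gives
\[
\limsup_{j\to\infty}\frac{B_j}{\delta_j}\le -2(n+2s)\int_{\Sigma_{\lambda_0}}g'\!\bigl(\tfrac{u(\bar x)-u(y)}{|\bar x-y|^s}\bigr)\frac{w_{\lambda_0}(y)(\lambda_0-y_n)}{|\bar x-y|^{n+2s+2}}\,dy<0,
\]
the integrand being strictly positive throughout $\Sigma_{\lambda_0}$.

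The main obstacle is the lack of homogeneity of $g$: establishing the $O(\delta_j)$ rate for $\Phi_1-\Phi_2$ requires quantitative use of the modulus of continuity of $g'$ together with the $C^{1,1}_{\text{loc}}$ regularity of $u$, and the integrability of the various kernels must be controlled both near $y=x_j$ (where $q_y\to 0$, using the Lipschitz control of $u$ and the growth of $g$) and at infinity (via the tail space $L_g$). These technical points, nearly automatic in the fractional $p$-Laplacian case thanks to homogeneity, require genuine adaptation in the Young-function framework.
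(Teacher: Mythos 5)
Your overall mechanism (reflect onto $\Sigma_{\lambda_j}$, linearize $g$ by the integral mean value theorem, and extract a negative term of order $\delta_j$ from the kernel difference weighted by $w_{\lambda_0}>0$, the $w_{\lambda_j}(x_j)$-term being nonpositive) is the same as the paper's, but your grouping has a genuine gap. The identity $\gfls u(x_j^{\lambda_j})-\gfls u(x_j)=w_{\lambda_j}(x_j)A_j+B_j$ is in general ill-defined: in the part carried by the kernel $q_y^{-(n+2s)}$ the convergent integrand is $\bigl(w_{\lambda_j}(x_j)-w_{\lambda_j}(y)\bigr)\Phi_2(y)q_y^{-(n+2s)}$, and it is precisely the cancellation $w_{\lambda_j}(x_j)-w_{\lambda_j}(y)=O(q_y)$ that tames the singularity at $y=x_j$ (leaving $O(q_y^{1-n-2s})$, integrable since $s<1$). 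Splitting $w_{\lambda_j}(x_j)$ from $w_{\lambda_j}(y)$ destroys this: near $x_j$ one has $\Phi_2(y)\approx g'\bigl(c\,q_y^{1-s}\bigr)\gtrsim q_y^{(1-s)(p^+-2)}$ when $\nabla u(x_j)\neq0$ (by \eqref{ellip} and \eqref{minmax1}), so $\int_{B_\rho(x_j)}\Phi_2\,q_y^{-(n+2s)}\,dy=+\infty$ whenever $(1-s)(p^+-2)\le 2s$ --- already for $G(t)=t^3$, $s=\tfrac12$. Thus $A_j=+\infty$ and, when $w_{\lambda_j}(x_j)<0$, your decomposition reads $-\infty+\infty$; when $w_{\lambda_j}(x_j)=0$ it reads $0\cdot\infty$.

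The second gap is the claim $\Phi_1-\Phi_2=O(\delta_j)$. Writing out the interpolated arguments, they differ by $(2\theta-1)\,w_{\lambda_j}(y)/p_y^{s}$ plus terms that are genuinely $O(\delta_j)$ (coming from $p_y$ versus $q_y$); and $w_{\lambda_j}(y)$ is of order one exactly on the compact subsets of $\Sigma_{\lambda_0}$ where you invoke $w_{\lambda_0}>0$. The symmetry $\theta\mapsto 1-\theta$ cannot cancel this for nonlinear $g'$: even if $g'$ were $C^2$ it would only remove the linear term of a Taylor expansion, leaving an $O(w_{\lambda_j}(y)^2)$ remainder, and here $g'$ is only assumed nondecreasing, so no quantitative expansion is available. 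Hence the term $\int w_{\lambda_j}(y)(\Phi_1-\Phi_2)q_y^{-(n+2s)}dy$ is neither $o(\delta_j)$ nor of a favorable sign, and your limit formula for $B_j/\delta_j$ is unjustified. The paper's pairing avoids both problems: it keeps together the two $g$-terms sharing the denominator $|x_j-y|^s$, whose difference has the pointwise sign of $w_{\lambda_j}(x_j)-w_{\lambda_j}(y)\le 0$ (by minimality at $x_j$) and is multiplied by the kernel difference $|x_j-y|^{-(n+s)}-|x_j-y^{\lambda_j}|^{-(n+s)}\ge 0$, which after dividing by $\delta_j$ has a strictly positive limit; the remainder is $w_{\lambda_j}(x_j)$ times a positive integral over the nonsingular kernel $|x_j-y^{\lambda_j}|^{-(n+2s)}$, hence nonpositive, and strict negativity comes from a fixed compact subset of $\Sigma_{\lambda_0}$. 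To salvage your scheme, keep the factor $w_{\lambda_j}(x_j)-w_{\lambda_j}(y)$ intact in the $q_y$-kernel term and isolate only the kernel difference --- which is essentially the paper's proof.
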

 
With the aid of the previous results, we can establish the symmetry of positive solutions under natural assumptions on the right hand side $f$; this is the content of the following two theorems, concerning bounded domains and the whole space, respectively.

\begin{thm}[Symmetry for solutions in a ball] \label{symmball}
Let $B$ be the unit ball in $\R^n$ and $u\in C^{1,1}_{\text{loc}}(B)\cap C(\overline{B})$ be a positive function in $B$ satisfying
\begin{align}\label{eqball}
\begin{cases}
\gfls u= f(u) & \text{ in }B \\
 u= 0 & \text{ in }\R^n\setminus B,
\end{cases}
\end{align}
where $f$ is a Lipschitz function with $f'$ nondecreasing and satisfying the following growth condition:
\begin{equation}\label{grwothf}
g'(t)\leq Cf'(t) \text{ for }0<t<1\text{ and some }C>0.
\end{equation}

Then $u$ is radially symmetric and monotone nondecreasing around the origin. 	
\end{thm}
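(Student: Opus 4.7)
The plan is to run the method of moving planes in an arbitrary direction $e_n$; once $u$ is shown to be symmetric across $T_0=\{x_n=0\}$, invoking the rotational invariance of $\gfls$ along every other unit vector yields full radial symmetry, and the monotonicity is a byproduct of the fact that $w_\lambda\geq 0$ in $\Sigma_\lambda$ for every $\lambda\leq 0$. The starting step is that for $\lambda$ slightly above $-1$ one has $w_\lambda\geq 0$ in $\Sigma_\lambda$: outside $B$ this is automatic from $u\equiv 0$, while inside the narrow slab $\Sigma_\lambda\cap B$ one argues by contradiction, assuming a negative interior minimum $x_0$ and combining
\[
\gfls u(x_0^\lambda)-\gfls u(x_0)=f(u(x_0^\lambda))-f(u(x_0))
\]
with the Lipschitz bound on $f$ and a non-local narrow-region estimate for $\gfls$. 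The growth relation \eqref{grwothf} between $g'$ and $f'$ is precisely what is required to make the two bounds incompatible for sufficiently thin slabs, so $\lambda_0>-1$ for $\lambda_0$ defined in \eqref{eq.lam0}.

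Suppose for contradiction that $\lambda_0<0$. By continuity $w_{\lambda_0}\geq 0$ in $\Sigma_{\lambda_0}$. Since reflection of $B$ across $T_{\lambda_0}$ sends some points of $B$ outside $B$, $w_{\lambda_0}\equiv 0$ is incompatible with the positivity of $u$, and the strong maximum principle in Theorem \ref{maxpplehypbdd} then gives $w_{\lambda_0}>0$ strictly in $\Sigma_{\lambda_0}\cap B$. By the definition of $\lambda_0$ there exist $\lambda_j\searrow\lambda_0$ and $x_j\in\Sigma_{\lambda_j}$ with $w_{\lambda_j}(x_j)=\min_{\Sigma_{\lambda_j}}w_{\lambda_j}\leq 0$; since $w_{\lambda_j}\geq 0$ outside $B$ and vanishes on $T_{\lambda_j}$, necessarily $x_j\in\Sigma_{\lambda_j}\cap B$. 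After passing to a subsequence $x_j\to\bar x$, the limit must lie on $T_{\lambda_0}$: the interior of $\Sigma_{\lambda_0}\cap B$ is excluded by $w_{\lambda_0}>0$, while $\partial B\cap\overline{\Sigma_{\lambda_0}}$ is excluded because there $u(\bar x)=0$ whereas $u(\bar x^{\lambda_0})>0$. Proposition \ref{bdryest} then produces
\[
\limsup_{j\to\infty}\frac{1}{\delta_j}\bigl(\gfls u(x_j^{\lambda_j})-\gfls u(x_j)\bigr)<0,\qquad \delta_j=\mathrm{dist}(x_j,T_{\lambda_j}).
\]
However, since $x_j$ is an interior critical point of the $C^{1,1}_{\text{loc}}$ function $w_{\lambda_j}$ and $w_{\lambda_j}\equiv 0$ on $T_{\lambda_j}$, Taylor expansion at $x_j$ evaluated at the normal projection of $x_j$ onto $T_{\lambda_j}$ yields $|w_{\lambda_j}(x_j)|=O(\delta_j^2)$. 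Combined with Lipschitz continuity of $f$ and the equation, this forces $\delta_j^{-1}(\gfls u(x_j^{\lambda_j})-\gfls u(x_j))\to 0$, contradicting the strict inequality above. Hence $\lambda_0=0$, and applying the analogous argument in the direction $-e_n$ gives the reverse inequality, so $u$ is symmetric across $T_0$.

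The main technical obstacle is the starting step: the lack of homogeneity of $g$ blocks the usual scaling reduction, so a narrow-region maximum principle adapted to $\gfls$ must be recovered by hand through the growth condition \eqref{grwothf}. Once past that point, the moving-planes mechanics reduce to a clean assembly of the strong maximum principle in Theorem \ref{maxpplehypbdd} with the boundary estimate in Proposition \ref{bdryest}, and the arbitrariness of $e_n$ together with rotational invariance of $\gfls$ upgrades the one-dimensional symmetry to full radial symmetry, with radial monotonicity inherited from $w_\lambda\geq 0$ for every $\lambda\leq 0$.
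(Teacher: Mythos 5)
Your overall strategy is the same as the paper's: moving planes in an arbitrary direction, a starting step for $\lambda$ near $-1$, then $\lambda_0=0$ by contradiction via the strong maximum principle of Theorem \ref{maxpplehypbdd} and the boundary estimate of Proposition \ref{bdryest}, and finally rotation invariance of $\gfls$ to upgrade to radial symmetry. Your second step is essentially the paper's argument and is correct; in fact your Taylor expansion at the interior critical point $x_j$ (giving $|w_{\lambda_j}(x_j)|=O(\delta_j^2)$, hence $\delta_j^{-1}(\gfls u(x_j^{\lambda_j})-\gfls u(x_j))\to 0$ through the equation) is a cleaner way to phrase what the paper does, and your explicit exclusion of limit points on $\partial B$ and your ruling out of $w_{\lambda_0}\equiv 0$ are fine.

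The genuine gap is in the first step: you assert a ``non-local narrow-region estimate for $\gfls$'' and say it can be recovered from \eqref{grwothf}, but this is exactly the part of the proof that cannot be quoted from the standard theory and must be built by hand, because the lack of homogeneity of $g$ blocks the usual scaling argument and a mere Lipschitz bound on $f$ (which is what you invoke alongside the estimate) does not suffice. Concretely, at a negative minimum $\bar x$ the paper writes $\gfls u(\bar x^\lambda)-\gfls u(\bar x)\leq w_\lambda(\bar x)\,I$ with $I=\int_{\Sigma_\lambda}\bigl(g'(\xi(y))+g'(\zeta(y))\bigr)|\bar x-y^\lambda|^{-n-2s}\,dy$, uses Lemma \ref{desig} to bound the mean-value points from below by $C_0\max\{|u(\bar x^\lambda)|,|u(\bar x)|\}$, and uses $u\equiv 0$ on $\Sigma_\lambda\setminus\Omega_\lambda$ to restrict the integral to $\Omega_{\lambda+1}\setminus\Omega_\lambda$, yielding $I\geq g'(C_0|u(\bar x)|)\,|1+\lambda|^{-2s}$. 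Since $g'(t)$ may degenerate as $t\to 0$, this lower bound cannot be compared with a fixed Lipschitz constant of $f$; one must instead use that $f'$ is nondecreasing (so $f'(\xi)w_\lambda(\bar x)\geq f'(u(\bar x))w_\lambda(\bar x)$) and then the pointwise comparison \eqref{grwothf} between $g'$ and $f'$ at the value $u(\bar x)$ itself, after which $\lambda$ close to $-1$ gives the contradiction. Without carrying out this kernel estimate and the comparison at the correct argument, the starting step of your moving-planes argument is not established; everything after it in your write-up is sound.
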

 
%

\begin{thm}[Symmetry for solutions in whole space: decreasing RHS] \label{symmwhole1}
Let $u\in C^{1,1}_{\text{loc}}(\R^n)\cap L_g$ satisfy
\begin{equation} \label{eqrn}
\gfls u =f(u)\quad\text{ and }\quad u>0\quad\text{ in }\R^n.
\end{equation}
Assume 
\begin{equation}\label{cond-f}
f'(t)\leq 0 \quad \text{ for } t\leq 1, 
\end{equation}
\begin{equation}\label{cond-decay}
\lim_{|x|\to\infty} u(x)=0.
\end{equation} 

Then $u$ is radially symmetric around some point in $\R^n$. 	
\end{thm}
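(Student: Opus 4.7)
The plan is to apply the method of moving planes using the tools developed in Theorems \ref{maxpplehyp}--\ref{maxpplehypbdd} and Proposition \ref{bdryest}. By the rotational invariance of $\gfls$, it suffices to produce a single hyperplane of symmetry perpendicular to $e_n$; running the argument in every direction and intersecting the resulting symmetry planes gives the center of radial symmetry. Throughout I use the notation $T_\lam,\,\Sigma_\lam,\,x^\lam,\,w_\lam$ introduced before \eqref{eq.lam0}.

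The first step is to show that $w_\lam\geq 0$ in $\Sigma_\lam$ for $\lam$ sufficiently negative, so that $\lam_0$ defined in \eqref{eq.lam0} is finite. By \eqref{cond-decay} pick $R_0$ so that $u\leq 1$ outside $B_{R_0}$ and choose $\lam\leq-R_0$. For any $x\in\{x_n>\lam\}$ with $u(x^\lam)>u(x)$ the point $x^\lam\in\Sigma_\lam$ lies outside $B_{R_0}$, hence $0\leq u(x)<u(x^\lam)\leq 1$. Assumption \eqref{cond-f} combined with \eqref{eqrn} then gives
\[
\gfls u(x^\lam)-\gfls u(x)=f(u(x^\lam))-f(u(x))\leq 0,
\]
so Theorem \ref{maxpplehyp} applied to the half-space $\Sigma=\{x_n>\lam\}$ yields $u(x^\lam)\leq u(x)$ there, which after reflection reads $w_\lam\geq 0$ in $\Sigma_\lam$.

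The next step is to prove $w_{\lam_0}\equiv 0$ in $\Sigma_{\lam_0}$, i.e., symmetry of $u$ about $T_{\lam_0}$. Continuity in $\lam$ yields $w_{\lam_0}\geq 0$; if the equality did not hold the strong maximum principle in Theorem \ref{maxpplehypbdd} would force $w_{\lam_0}>0$ strictly in $\Sigma_{\lam_0}$. By the definition of $\lam_0$ one then selects $\lam_j\searrow\lam_0$ and $x_j\in\Sigma_{\lam_j}$ realizing a negative minimum $w_{\lam_j}(x_j)=\min_{\Sigma_{\lam_j}}w_{\lam_j}\leq 0$, attained because $w_{\lam_j}\to 0$ at infinity by \eqref{cond-decay}. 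A compactness argument based on this decay and on the strict positivity of $w_{\lam_0}$ in $\Sigma_{\lam_0}$ forces $x_j\to\bar x\in T_{\lam_0}$, so Proposition \ref{bdryest} produces
\[
\limsup_{j\to\infty}\frac{1}{\delta_j}\bigl(\gfls u(x_j^{\lam_j})-\gfls u(x_j)\bigr)<0.
\]
But using \eqref{eqrn} this quantity equals $\delta_j^{-1}\bigl(f(u(x_j^{\lam_j}))-f(u(x_j))\bigr)$, and since $u(x_j^{\lam_j})\leq u(x_j)$ with both values tending to $u(\bar x)$ lying (thanks to \eqref{cond-decay} and the location of $T_{\lam_0}$) in the range where $f'\leq 0$, assumption \eqref{cond-f} makes this quantity non-negative for $j$ large, a contradiction. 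Thus $w_{\lam_0}\equiv 0$, and repeating in every direction yields the center of radial symmetry.

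The most delicate point is the final contradiction: the monotonicity assumption \eqref{cond-f} is available only on $[0,1]$, and in the non-homogeneous setting no scaling of $u$ reduces to $\sup u\leq 1$. One must therefore place the values $u(x_j^{\lam_j}),u(x_j)$ in this range by a careful use of \eqref{cond-decay} and of the location of the critical plane $T_{\lam_0}$; this same interplay is what allows the starting step of the moving-planes procedure to work.
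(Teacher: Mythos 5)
Your Step 1 is correct and is in fact a clean variant of the paper's argument: instead of locating a negative minimum of $w_\lam$ and reusing the computation \eqref{es.negativo}, you verify hypothesis \eqref{eq.ineq} directly (for $\lam\leq -R_0$ the reflected point lies in the region where $u\leq 1$, so \eqref{cond-f} gives the right sign of $f(u(x^\lam))-f(u(x))$) and invoke Theorem \ref{maxpplehyp}; this is legitimate because $u$ is continuous and decays, hence is bounded, so the antisymmetric function is bounded as required.

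Step 2, however, has two genuine gaps. First, the boundedness of the minimizing sequence $\{x_j\}$ does not follow from ``decay plus strict positivity of $w_{\lam_0}$'': the negative minima could escape to infinity with $w_{\lam_j}(x_j)\to 0^-$, and positivity of $w_{\lam_0}$ in $\Sigma_{\lam_0}$ says nothing about behavior at infinity. You need the argument the paper gives (which is essentially your own Step 1 again): if $|x_j|\to\infty$ then $u(x_j)$, and hence the intermediate value $\xi_j$ in $f(u(x_j^{\lam_j}))-f(u(x_j))=f'(\xi_j)\,w_{\lam_j}(x_j)$, is eventually $\leq 1$, so \eqref{cond-f} forces $\gfls u(x_j^{\lam_j})-\gfls u(x_j)\geq 0$, contradicting the strict inequality \eqref{es.negativo} valid at a negative interior minimum; only then do you obtain $x_j\to\bar x\in T_{\lam_0}$ and may invoke Proposition \ref{bdryest}. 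Second, and more seriously, your final contradiction assumes that $u(\bar x)$ lies in the range $t\leq 1$ where $f'\leq 0$, justified by ``\eqref{cond-decay} and the location of $T_{\lam_0}$''. Nothing places the critical plane in the region where $u\leq 1$: generically $T_{\lam_0}$ sits where $u$ is largest (for a bump of height $10$ it passes through the maximum), and the decay hypothesis only controls $u$ far away; so $f$ need not be monotone near $u(\bar x)$ and the sign argument collapses. The correct conclusion, as in the paper, uses no sign condition at this stage: since $w_{\lam_j}$ vanishes on $T_{\lam_j}$, $x_j$ is an interior minimum and $\delta_j\to 0$, one gets $w_{\lam_j}(x_j)/\delta_j\to 0$, and since $f'(\xi_j)$ stays bounded, $\delta_j^{-1}\bigl(\gfls u(x_j^{\lam_j})-\gfls u(x_j)\bigr)=f'(\xi_j)\,w_{\lam_j}(x_j)/\delta_j\to 0$, which contradicts the strictly negative limsup of Proposition \ref{bdryest}.
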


We leave as an open question to find which are the (best) conditions on $f$ and on the decay of $u$ at infinity in order to ensure symmetry of positive solutions $u\in C^{1,1}_{\text{loc}}(\R^n)\cap L_g$ of
\[
\gfls u =f(u)\quad\text{ and }\quad u>0\quad\text{ in }\R^n
\]
in the case in which $f$ is an increasing function.

Further interesting research directions would be to address qualitative properties of solutions unbounded domains, for instance
\[
\gfls u =f(u)\text{ in }\{x_n>0\}\quad\text{ and }\quad u=0\text{ on }\{x_n=0\},
\]
or more general unbounded domains such as those given by the epigraph of a Lipschitz function.

%
%
%
%

\subsection*{Organization of the paper}
This article is organized as follows. Section \ref{sec.prel} is devoted to introduce the notion of Young function and the proof of some useful inequalities, and several properties that the fractional $g-$Laplacian fulfills. In section \ref{sec.max} we prove the maximum principles in domains and hyperplanes as well as the Liouville theorem, namely, Theorems \ref{teo0}, \ref{maxpplehyp} and \ref{liou}. Section \ref{sec.maxandbdry} contains the proofs of the maximum principle on bounded domains in hyperplanes, i.e., Theorem \ref{maxpplehypbdd}, and the boundary estimate stated in Proposition \ref{bdryest}. In Section \ref{sec.symm} we deliver the proof of our symmetry results, namely Theorems \ref{symmball} and \ref{symmwhole1}. Finally, in section \ref{sec.ext} we introduce some applications and  extensions of our results.

\section{Preliminaries}\label{sec.prel}

In this section we give some preliminary definitions and technical results that will be used throughout the paper. We recall the  notion of Young function and present some simple technical inequalities that will be helpful. Then, we define the fractional $g-$Laplacian and prove some important properties of it, both useful for the rest of the paper and of independent interest.

\subsection{Young functions}\label{ssec.yfun}

An application $G\colon[0,\infty)\longrightarrow [0,\infty)$ is said to be a  \emph{Young function} if it admits the integral representation 
\[
G(t)=\int_0^t g(\tau)\,d\tau,
\] 
where the right-continuous function $g$ defined on $[0,\infty)$ has the following properties:
\begin{align*}
&g(0)=0, \quad g(t)>0 \text{ for } t>0, \\
&g \text{ is nondecreasing on } (0,\infty) \\
&\lim_{t\to\infty}g(t)=\infty.
\end{align*}

From these properties it is easy to see that a Young function $G$ is continuous, nonnegative, strictly increasing and convex on $[0,\infty)$. Further, we recall that we may extend $g$ to the whole $\R$ in an odd fashion: for $t<0$ $g(t)=-g(-t)$.
 
We will consider the class of Young functions such that $g=G'$ is an absolutely continuous function that satisfies the condition 
\begin{equation}\label{ellip} 
1<p^--1 \leq \frac{tg'(t)}{g(t)}\leq p^+-1<\infty,\quad t>0.
\end{equation}
This condition was first considered in the seminal work of G. Lieberman \cite{lieberman1991natural} and is the analogous to the ellipticity condition in the linear theory as it will be apparent later on; it essentially says that \eqref{ellip} means that $g(t)$ is ``trapped between powers''. Moreover, integrating \eqref{ellip} we have that $G$ verifies
\begin{equation}\label{eq.p}
2<p^-\leq  \frac{tg(t)}{G(t)} \leq p^+<\infty,\quad t>0.
\end{equation}


In \cite[Theorem 4.1]{KrRu61} it is shown that the upper bound in \eqref{ellip} (or in \eqref{eq.p}) is equivalent to the so-called \emph{$\Delta_2$ condition (or doubling condition)}, namely
\begin{equation} \tag{$\Delta_2$}
g(2t) \leq 2^{p^+-1} g(t), \qquad G(2t)\leq 2^{p^+} G(t) \qquad t\geq 0.
\end{equation}
It is easy to verify that this condition implies the existence of constants $C_1,C_2>0$ such that for any $a,b\geq0$
\begin{equation}\label{delta2}
g(a+b) \leq C_1(g(a)+g(b)), \qquad G(a+b) \leq C_2(G(a)+G(b)).
\end{equation}

%

Further, the inequalities 
\begin{equation}\label{minmax1} 
\min\{\alpha^{p^--1}, \alpha^{p^+-1}\} g(t) \leq g(\alpha t) \leq \max\{\alpha^{p^--1}, \alpha^{p^+-1}\}g(t)
\end{equation}
and
\begin{equation}\label{minmax2} 
\frac{\min\{\alpha^{p^-}, \alpha^{p^+}\}}{p^+}G(t) \leq G(\alpha t) \leq p^+\max\{\alpha^{p^-}, \alpha^{p^+}\}G(t)
\end{equation}
hold for any, $\alpha,t\geq 0$ (see \cite[Lemma2.1]{bonder2018ah}).

A final condition to be imposed on $g$ is that its derivative $g'$ (that exists a.e.) is nondecreasing; we point out that this is analogous of dealing with the degenerate case $p\ge 2$ for the fractional $p-$Laplacian.

\subsection{Some useful inequalities}

We include here some technical inequalities that will be used throughout the paper; the proofs are simple but included for the sake of completeness.

\begin{lem}\label{lem.2}  
Let $G$ be a Young function such that $g=G'$ satisfies \eqref{ellip}. Then there exists $C=C(p^+,p^-)>0$ such that 
$$
g(b)-g(a)\geq C g(b-a).
$$
for all $b\geq a$.
\end{lem}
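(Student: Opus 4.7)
The plan is to split into cases based on the signs of $a$ and $b$ (recalling that $g$ is extended to $\R$ as an odd function) and reduce everything to the base case $0\le a\le b$. The base case will be handled using convexity of $g$ on $[0,\infty)$ that follows from the standing hypothesis ``$g'$ is nondecreasing'' stated at the end of Section \ref{ssec.yfun}; the mixed-sign case will use the $\Delta_2$ consequence \eqref{delta2}.

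For $0\le a\le b$, I would write
\[
g(b)-g(a)=\int_a^b g'(s)\,ds=\int_0^{b-a}g'(t+a)\,dt\ge \int_0^{b-a}g'(t)\,dt=g(b-a),
\]
the inequality following from monotonicity of $g'$ and $a\ge 0$; this gives $C=1$ in this case. (Equivalently, a convex function vanishing at $0$ is super-additive on $[0,\infty)$.) When $a\le 0\le b$, oddness of $g$ gives $g(b)-g(a)=g(b)+g(-a)$, and \eqref{delta2} applied to the nonnegative summands $b$ and $-a$ whose sum is $b-a$ yields
\[
g(b-a)\le C_1\bigl(g(b)+g(-a)\bigr)=C_1\bigl(g(b)-g(a)\bigr),
\]
so the inequality holds with $C=1/C_1=2^{1-p^+}$. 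Finally, when $a\le b\le 0$, the substitution $a'=-b$, $b'=-a$ gives $0\le a'\le b'$ with $g(b)-g(a)=g(b')-g(a')$ and $b-a=b'-a'$, reducing to the base case.

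I do not foresee any serious obstacle; the only care needed is in tracking signs through the odd extension, and \eqref{delta2} is exactly tailored to the cross case $a\le 0\le b$. Taking $C=2^{1-p^+}$ uniformly handles every configuration and depends only on $p^+$, as claimed.
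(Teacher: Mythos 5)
Your proof is correct, and while it shares the paper's overall skeleton (a case split according to the signs of $a$ and $b$, with the negative case reduced by oddness and the mixed case handled by oddness plus \eqref{delta2} exactly as in the paper), the treatment of the core case $0\le a\le b$ is genuinely different and simpler. The paper subdivides this case further into $a\ge b/2$ and $0<a<b/2$, invoking the mean value theorem for $g$, the ellipticity bound \eqref{ellip}, and the scaling inequality \eqref{minmax1}, which produces a constant strictly less than $1$ depending on $p^\pm$; you instead observe that superadditivity of $g$ on $[0,\infty)$ (via $g(b)-g(a)=\int_0^{b-a}g'(t+a)\,dt\ge\int_0^{b-a}g'(t)\,dt=g(b-a)$, using absolute continuity of $g$, $g(0)=0$, and the standing assumption that $g'$ is nondecreasing) gives the inequality with $C=1$ in that case. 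This buys a shorter argument and a sharper constant, at the cost of relying on the convexity-type hypothesis on $g'$ — but the paper's own Case 1 already uses $g'(\xi)\ge g'(b/2)\ge g'((b-a)/2)$, so you are not assuming more than the paper does. Two small bookkeeping remarks: the value $C_1=2^{p^+-1}$ in \eqref{delta2} (hence $C=2^{1-p^+}$) is not stated explicitly in the paper but does follow from the $\Delta_2$ constant together with monotonicity of $g$, so your explicit uniform constant is justified; and since $2^{1-p^+}<1$, taking $C=2^{1-p^+}$ in all cases is indeed consistent with the claimed dependence $C=C(p^+,p^-)$.
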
 
 
\begin{proof}
We split the proof in several cases.

\textit{Case 1:} $b\geq a \geq 0$. 

If $a\geq \frac{b}{2}$, then, for some $\xi \in (a,b)$ and using \eqref{ellip},
\begin{align*}
g(b)-g(a) &= g'(\xi)(b-a) \\
		  &\geq g'\left(\frac{b}{2}\right)(b-a)\\
		  &\geq g'\left(\frac{b-a}{2}\right)(b-a)\\
		  &\geq 2(p^--1) g\left(\frac{b-a}{2}\right) \\
		  &\geq 2^{2-p^+}(p^--1) g\left(b-a\right)
\end{align*}
where we have used \eqref{minmax1} for the last inequality. 

If $0<a<\tfrac{b}{2}$, then we use \eqref{minmax1} again and the fact that $g$ is nondecreasing:
\begin{align*}
g(b)-g(a)&\geq g(b)-g\left(\frac{b}{2}\right) \\
		 & \geq   \left(1-2^{1-p^+}\right)g(b)  \\
		&\geq \left(1-2^{1-p^+}\right)g(b-a).
\end{align*}

\textit{Case 2:} $a\leq b\leq 0$.

In this case $|a|\geq |b|\geq 0$ and we can use oddity and derive the inequality from the previous case:
$$
g(b)-g(a)=g(|a|)-g(|b|)\geq Cg(|a|-|b|)= Cg(b-a).
$$

\textit{Case 3:} $a\leq 0 \leq b$. 

Here $a$ and $b$ have different signs, then since $g$ is odd
$$
g(b)-g(a)=g(|b|)+g(|a|) \geq C g(|b|+|a|) = Cg(b-a),
$$
where we have used \eqref{delta2} for $g$. The proof is now completed.
\end{proof}

\begin{lem} \label{lemita}
For any $a,b \in \R$ and $g$ an absolutely continuous function such that $g'$ is nondecreasing, it holds that	
$$
|g(a+b)-g(a)| \leq |b| g'(|a|+|b|).
$$
\end{lem}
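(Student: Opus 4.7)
The plan is to reduce the claim to a pointwise bound on $g'$ via the fundamental theorem of calculus and then to exploit the evenness of $g'$ (inherited from the odd extension of $g$) together with its monotonicity on $[0,\infty)$ to control the integrand uniformly.

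First, assuming $b\neq 0$ (the other case is trivial), absolute continuity of $g$ yields
$$
g(a+b)-g(a)=\int_a^{a+b}g'(t)\,dt,
$$
so that
$$
|g(a+b)-g(a)|\leq \int_{\min\{a,a+b\}}^{\max\{a,a+b\}}|g'(t)|\,dt.
$$
The length of this interval is exactly $|b|$, so it suffices to bound $|g'(t)|$ uniformly by $g'(|a|+|b|)$ for $t$ in the interval.

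To establish this uniform bound, I would use two features of $g$ recalled in Section \ref{ssec.yfun}: $g$ is odd on $\R$, which makes $g'$ even; and $g'$ is nonnegative and nondecreasing on $[0,\infty)$. Combined, these give $|g'(t)|=g'(|t|)$ for every $t\in\R$. Since any $t$ in the integration interval satisfies $|t|\leq \max\{|a|,|a+b|\}\leq |a|+|b|$, the monotonicity of $g'$ on $[0,\infty)$ delivers $|g'(t)|\leq g'(|a|+|b|)$. Substituting back into the integral finishes the argument.

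I do not anticipate any serious obstacle. The only mildly subtle point is interpreting the hypothesis ``$g'$ nondecreasing'' as monotonicity on $[0,\infty)$ (the only reading compatible with $g$ being odd, since an even function that is nondecreasing on all of $\R$ must be constant), and then promoting this via evenness to the pointwise bound on $|g'(t)|$ in terms of $|t|$ that the integrand requires.
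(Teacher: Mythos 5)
Your proof is correct and takes essentially the same route as the paper's: both express the difference via absolute continuity/the fundamental theorem of calculus along the segment from $a$ to $a+b$ and bound the integrand pointwise by $g'(|a|+|b|)$, using that $g'$ is even (since $g$ is odd) and nondecreasing on $[0,\infty)$. The only difference is cosmetic: the paper parametrizes the segment as $g(a+b)-g(a)=b\int_0^1 g'(a+tb)\,dt$ and bounds $g'(a+tb)\leq g'(|a|+t|b|)\leq g'(|a|+|b|)$, whereas you integrate over the interval directly.
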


\begin{proof}
A straightforward computation gives (recall that $g'$ is nondecreasing)
\begin{align*}
|g(a+b)-g(a)| &= \left|b \int_0^1 g'(a+tb)\,dt \right|\\
&\leq |b| \int_0^1g'(|a|+t|b|)\,dt\\
&\leq |b| g'(|a|+|b|),
\end{align*}
from where the lemma follows.
\end{proof}

\begin{lem}\label{desig}  
Let $G$ be a Young function such that $G'=g$ satisfies \eqref{ellip}. There exists a constant $0<C_0=C_0(p^-,p^+)\leq 1$ such that, if we write
\[
g(b)-g(a)=g'(\xi)(b-a),
\]
then
\begin{equation} \label{relac}
|\xi|\geq C_0\max\{|b|,|a|\}.
\end{equation}
\end{lem}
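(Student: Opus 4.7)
The plan is to argue by a case analysis on the signs of $a$ and $b$, handling the case $0 \leq a \leq b$ directly and reducing the others to it via the oddity of $g$ (equivalently, the evenness of $g'$). The key ingredients will be the ellipticity condition \eqref{ellip}, the growth bounds \eqref{minmax1}, and the observation that $p^->2$ (from \eqref{eq.p}) so that the exponent $p^--2$ is strictly positive and the manipulations below are meaningful.

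For the main case $0\leq a<b$, I will write $\xi=\alpha b$ with $\alpha\in(0,1]$ and aim for a lower bound on $\alpha$ depending only on $p^-,p^+$. If $a\geq b/2$ the conclusion is immediate with $\alpha\geq 1/2$, so the interesting subcase is $a<b/2$. There, monotonicity of $g$ combined with \eqref{minmax1} applied at $\alpha=1/2$ yields
\[
g(b)-g(a)\geq g(b)-g(b/2)\geq (1-2^{1-p^-})g(b),
\]
which, together with $b-a\leq b$, gives $g'(\xi)\geq c_1 g(b)/b$ for $c_1=1-2^{1-p^-}>0$. On the other hand, the upper bound in \eqref{ellip} applied at $\xi$ and then \eqref{minmax1} used in the form $g(\alpha b)\leq \alpha^{p^--1}g(b)$ produce
\[
g'(\xi)\leq (p^+-1)\frac{g(\xi)}{\xi}\leq (p^+-1)\alpha^{p^--2}\frac{g(b)}{b}.
\]
Matching the two estimates gives $\alpha^{p^--2}\geq c_1/(p^+-1)$, hence $\alpha\geq C_0$ for an explicit $C_0=C_0(p^-,p^+)$.

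The case $a\leq b\leq 0$ will follow from the main case by the substitution $(a',b')=(-b,-a)$: since $g$ is odd, $(g(b)-g(a))/(b-a)=(g(b')-g(a'))/(b'-a')$, so the main case applied to $(a',b')$ produces a suitable $\xi'\geq C_0 b'=C_0|a|$, and $\xi=-\xi'$ satisfies the claim. For the mixed-sign case $a<0<b$ with, say, $|b|\geq |a|$, a naive application of the MVT could give a $\xi$ arbitrarily close to the origin; instead I will exploit the fact that $g'$ is even and nondecreasing on $[0,\infty)$ to select, via the intermediate value theorem, a $\xi\in[0,b]$ solving the identity. The lower bound $(g(b)-g(a))/(b-a)=(g(b)+g(|a|))/(b+|a|)\geq g(b)/(2b)$ (valid since $|a|\leq b$ and $g$ is odd) then drives the same computation as in the main case and yields $\alpha^{p^--2}\geq 1/(2(p^+-1))$.

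The main obstacle I anticipate is precisely this opposite-sign subtlety: the inequality \eqref{relac} would fail if one were forced to use the ``short side of the origin,'' so the whole argument depends on using the evenness of $g'$ to move $\xi$ to the correct side. All remaining work is a routine combination of the ellipticity \eqref{ellip} with the doubling-type estimate \eqref{minmax1}, with $p^->2$ playing the role of a nontrivial degeneracy exponent.
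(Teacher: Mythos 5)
Your estimates are correct and the overall strategy is essentially the paper's: split according to the signs and relative sizes of $a$ and $b$, bound the difference quotient from below by a multiple of $g(\max\{|a|,|b|\})/\max\{|a|,|b|\}$ using \eqref{minmax1}, monotonicity and the oddness of $g$, then convert this into a lower bound on $|\xi|$ via the upper bound in \eqref{ellip} and the strict inequality $p^->2$ from \eqref{eq.p}. In fact you are more explicit than the paper at the final step: the paper closes both cases with $2g'(|\xi|)\ge c\,g(|b|)/|b|\ge \tfrac{c}{p^+-1}g'(|b|)$ ``since $g'$ is nondecreasing,'' which really needs the quantitative comparison you spell out, namely $g'(\alpha b)\le (p^+-1)g(\alpha b)/(\alpha b)\le (p^+-1)\alpha^{p^--2}g(b)/b$. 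The one genuine deviation is your mixed-sign case, and there the framing is slightly off: as Lemma \ref{desig} is applied in the proof of Theorem \ref{symmball}, the point $\xi$ is already fixed (a mean-value point between $a$ and $b$), so you may not re-select a more convenient $\xi\in[0,b]$ by the intermediate value theorem (nor is such a selection guaranteed when $g'$ is merely nondecreasing a.e.). But the obstacle you anticipate is illusory and the detour unnecessary: since $g$ is odd, $g'$ is even, so \emph{any} $\xi$ with $g'(\xi)(b-a)=g(b)-g(a)$ satisfies $g'(|\xi|)=g'(\xi)\ge g(b)/(2b)$, while $g'(|\xi|)\le (p^+-1)\alpha^{p^--2}g(b)/b$ with $\alpha=|\xi|/b$ when $|\xi|\le b$; your own computation then shows no solution can lie near the origin, so the given mean-value point automatically satisfies \eqref{relac} (this is also how the paper argues, via $2g'(|\xi|)|b|\ge |g(b)-g(a)|\ge |g(b)|$). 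With that re-reading your proof is complete, and it even supplies a step the paper leaves implicit.
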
 
\begin{proof}
Without loss of generality we may assume that $|b|>|a|$. 

\textit{Case 1:} $|a|\geq \frac{|b|}{2}$. 

If $a$ and $b$ have the same sign, then $\xi$ is between $a$ and $b$ and \eqref{relac} holds with $C_0=1$. If $a$ and $b$ are of opposite signs, since $g$ is odd, $g(a)$ and $g(b)$ are of opposite signs. It follows that
$$
2g'(|\xi|)|b| \geq |g'(\xi)||b-a|=|g(b)-g(a)|\geq |g(b)|.
$$
Then, by using \eqref{ellip} and the fact that $|g(b)|=|g(|b|)|$,
$$
2g'(|\xi|) \geq \frac{|g(b)|}{|b|}    \geq \frac{g'(|b|)}{p^+-1}
$$
and the desired relation holds since $g'$ is nondecreasing.

\textit{Case 2:} $|a|\leq \frac{|b|}{2}$. 

In this case \eqref{ellip}  implies that
$$
g(|b|)\geq g(2|a|) \geq 2^{p^--1}g(|a|)
$$
from where
$$
2g'(|\xi|)|b|\geq |g(b)-g(a)|\geq |g(|b|)|-|g(|a|)|\geq (1-2^{-(p^--1)})g(|b|) 
$$
and \eqref{relac} follows.
\end{proof}

\subsection{The fractional $g-$Laplacian}

Recall the fractional $g-$Laplacian defined in \cite{FBS} by:
\begin{equation}\label{eq.gfls}
\gfls u(x):=\textrm{p.v.}\int_{\R^n}g\left(D_su(x,y)\right)\frac{dy}{|x-y|^{n+s}}
\end{equation}
where the notation
\[
D_su(x,y):=\frac{u(x)-u(y)}{|x-y|^s}
\]
is (and will be) used. 

In this section we include some elementary properties of the fractional $g-$Laplacian. $\gfls$ is an operator ``of order $2s$'', so to ensure that $\gfls$ is well defined at $x$ we need $u\in C^{2s+\delta}$ at $x$ for some $\delta>0$; for the purposes of this paper it will be enough to assume that $u\in C^{1,1}$ at $x$ in the sense that there exists $C\geq 0$ such that
\begin{equation}\label{c11}
|u(x)-u(y)-\nabla u(x)\cdot (y-x)|\leq C|y-x|^2\text{ for } |y-x|\text{ small enough.}
\end{equation}
This is a stronger regularity assumption than $C^{2s+\delta}$ but serves for the sake of clarity. 

On the other hand, because of the nonlocal nature of $\gfls$ we need to control the behavior of $u$ at infinity; we will denote 
\begin{equation}\label{integrability}
L_g:=\left\{u\in L^1_{\text{loc}}(\R^n) :\int_{\R^n}g\left(\frac{|u(x)|}{1+|x|^s}\right)\frac{dx}{1+|x|^{n+s}}<\infty\right\}.
\end{equation}

Notice that the inclusion 
\begin{equation}\label{inclusion}
L_g\subset L_{g'}
\end{equation}
holds; indeed, if $u\in L_g$ we can split
\[
\int_{\R^n}g'\left(\frac{|u(x)|}{1+|x|^s}\right)\frac{dx}{1+|x|^{n+s}}=\left(\int_{\left\{x:\frac{|u(x)|}{1+|x|^s}\leq  1\right\}}+\int_{\left\{x:\frac{|u(x)|}{1+|x|^s}>1\right\}} \right)
g'\left(\frac{|u(x)|}{1+|x|^s}\right)\frac{dx}{1+|x|^{n+s}}.
\]
Since $g'$ is nondecreasing the first term is bounded by
\[
g'(1)\int_{\R^n}\frac{dx}{1+|x|^{n+s}}<\infty
\]
whereas the second term is bounded (using the \eqref{ellip} and the fact that $\frac{|u(x)|}{1+|x|^s}>1$) by
\[
(p^+-1)\int_{\R^n}g\left(\frac{|u(x)|}{1+|x|^s}\right)\frac{dx}{1+|x|^{n+s}}<\infty.
\]

The next lemma shows that \eqref{c11} and \eqref{integrability} are enough for \eqref{eq.gfls} to be well defined.

\begin{lem} \label{puntual}
Let $u\in C^{1,1}\cap L_g$ at $x\in\R^n$, then $\gfls u(x)$ is well defined. 
\end{lem}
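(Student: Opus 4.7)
The plan is to establish convergence of the principal value integral by splitting $\gfls u(x)$ into a near-diagonal piece and a tail piece, showing that the tail piece is absolutely integrable thanks to the doubling property and the definition of $L_g$, and that the near-diagonal piece converges once one subtracts an odd principal part that contributes zero in principal value.

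For the far part, I would pick $\rho\in(0,1]$ so small that the $C^{1,1}$ bound \eqref{c11} holds on $B_\rho(x)$. On $\R^n\setminus B_\rho(x)$ the strategy is direct: using $|u(x)-u(y)|\leq |u(x)|+|u(y)|$ together with \eqref{delta2} one bounds
\[
g\!\left(\frac{|u(x)-u(y)|}{|x-y|^s}\right) \leq C_1\!\left( g\!\left(\frac{|u(x)|}{|x-y|^s}\right) + g\!\left(\frac{|u(y)|}{|x-y|^s}\right)\right).
\]
On the bounded annulus $B_1(x)\setminus B_\rho(x)$, the local integrability of $g(|u|)$ inherited from $u\in L_g$ (via \eqref{integrability} and \eqref{minmax1}, since the $L_g$ weights are pinched between constants on bounded sets) controls both terms. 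On the far tail $\R^n\setminus B_1(x)$, splitting into $\{|y|\leq 2|x|\}$ (bounded, same argument) and $\{|y|>2|x|\}$ (where $|x-y|\simeq 1+|y|$), one compares directly with $g\!\left(\frac{|u(y)|}{1+|y|^s}\right)(1+|y|^{n+s})^{-1}$ using \eqref{minmax1}, so integrability follows from $u\in L_g$.

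For the near piece, condition \eqref{c11} yields
\[
D_su(x,y) = a(y) + R(x,y),\qquad a(y):=\frac{\nabla u(x)\cdot (x-y)}{|x-y|^s},\qquad |R(x,y)|\leq C|x-y|^{2-s}.
\]
Since $g$ is odd and $a(y)$ is antisymmetric under $y-x\mapsto -(y-x)$, for every $\ve\in(0,\rho)$ one has $\int_{B_\rho(x)\setminus B_\ve(x)} g(a(y))\,|x-y|^{-n-s}\,dy=0$. Thus the principal value over $B_\rho(x)$ reduces to the ordinary integral of the residual $g(D_su(x,y))-g(a(y))$ against $|x-y|^{-n-s}\,dy$, provided this integral is absolutely convergent.

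The main obstacle is therefore to show absolute integrability of this residual near $x$. Here I would apply Lemma \ref{lemita} with $a=a(y)$, $b=R(x,y)$, obtaining
\[
|g(D_su(x,y))-g(a(y))| \leq |R(x,y)|\,g'\bigl(|a(y)|+|R(x,y)|\bigr) \leq C|x-y|^{2-s}\,g'\!\bigl(C'|x-y|^{1-s}\bigr),
\]
where we used $|a(y)|\leq |\nabla u(x)||x-y|^{1-s}$. Combining \eqref{ellip} with \eqref{minmax1} gives $g'(t)\leq Ct^{p^--2}$ for $0<t\leq 1$; after substitution and shrinking $\rho$ if needed, the integrand divided by $|x-y|^{n+s}$ is bounded by $C|x-y|^{(1-s)p^--n}$. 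Since $p^->2$ forces $(1-s)p^->0$, this is integrable on $B_\rho(x)$, completing the proof. The real subtlety is the Orlicz exponent bookkeeping, where the ellipticity lower bound $p^->2$ from \eqref{eq.p} is what secures integrability.
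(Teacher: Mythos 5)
Your argument is correct and follows essentially the same route as the paper's proof: split near/far, cancel the odd part $g\left(\frac{\nabla u(x)\cdot(x-y)}{|x-y|^s}\right)$ in the principal value, control the residual with Lemma \ref{lemita}, and handle the tail via \eqref{delta2}, the comparability $|x-y|^s\sim 1+|y|^s$, and the $L_g$ condition. One small correction to your closing remark: the paper just bounds $g'\bigl(C|x-y|^{1-s}\bigr)\leq g'(C)$ by monotonicity, which already gives the integrable exponent $n-2(1-s)<n$, so the convergence near $x$ rests on $2(1-s)>0$ (i.e.\ $s<1$) and not on the lower bound $p^->2$ --- indeed your own power bound $g'(t)\leq Ct^{p^--2}$ yields the exponent $(1-s)p^--n>-n$ for any $p^->0$, so the Orlicz exponent bookkeeping is a refinement rather than the crux.
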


\begin{proof}
Let $0<\varepsilon<1$ and write 
\[
\int_{\R^n\setminus B_\varepsilon(x)}g\left(\frac{u(x)-u(y)}{|x-y|^s}\right)\frac{dy}{|x-y|^{n+s}}=I_1+I_2
\]
with
\[
I_1:=\int_{B_1(x)\setminus B_\varepsilon(x)}g\left(\frac{u(x)-u(y)}{|x-y|^s}\right)\frac{dy}{|x-y|^{n+s}}
\]
and
\[	
I_2:=\int_{\R^n\setminus B_1(x)}g\left(\frac{u(x)-u(y)}{|x-y|^s}\right)\frac{dy}{|x-y|^{n+s}}.
\]

On one hand we have
\[
u(x)-u(y)=\nabla u(x)\cdot(x-y)+O(|x-y|^{2})
\]
as $y\rightarrow x$ so Lemma \ref{lemita} gives (recall $|x-y|<1$)
\[
\left|g\left(\frac{\nabla u(x)\cdot(x-y)+O(|x-y|^2)}{|x-y|^s}\right)-g\left(\frac{\nabla u(x)\cdot(x-y)}{|x-y|^s}\right)\right| \leq C|x-y|^{2-s} g'\left(C|x-y|^{1-s}\right) 
\]

Next notice that $g\left(\frac{\nabla u(x)\cdot(x-y)}{|x-y|^s}\right)$ is odd so its integral over $B_1(x)\setminus B_\varepsilon(x)$ vanishes, whence 
\begin{align*}
\left|\int_{B_1(x)\setminus B_\varepsilon(x)}g\left(\frac{u(x)-u(y)}{|x-y|^s}\right)\frac{dy}{|x-y|^{n+s}}\right| & = \left|\int_{B_1(x)\setminus B_\varepsilon(x)}  \Big[g\left(\frac{\nabla u(x)\cdot(x-y)+O(|x-y|^2)}{|x-y|^s}\right)\right. \\ 
																												   & \left. -g\left(\frac{\nabla u(x)\cdot(x-y)}{|x-y|^s}\right)\frac{dy}{|x-y|^{n+s}}\Big]\right| \\	
																												   &\leq \int_{B_1(x)\setminus B_\varepsilon(x)}C|x-y|^{2-s} g'\left(C|x-y|^{1-s}\right)\frac{dy}{|x-y|^{n+s}} \\
																												   &\leq Cg'\left(C\right)\int_{B_1(x)\setminus B_\varepsilon(x)}\frac{dy}{|x-y|^{n-2(1-s)}}
\end{align*}
so $I_1$ converges as $\varepsilon\rightarrow0^+$.

On the other hand, by \eqref{delta2}
\begin{align*}
\int_{\R^n\setminus B_1(x)}g\left(\frac{u(x)-u(y)}{|x-y|^s}\right)\frac{dy}{|x-y|^{n+s}}& \leq C\left(\int_{\R^n\setminus B_1(x)}g\left(\frac{|u(x)|}{|x-y|^s}\right)\frac{dy}{|x-y|^{n+s}} \right.\\ 
& \left.+\int_{\R^n\setminus B_1(x)}g\left(\frac{|u(y)|}{|x-y|^s}\right)\frac{dy}{|x-y|^{n+s}}\right)										 
\end{align*}
and noticing that for $y$ outside $B_1(x)$ the polynomials $|x-y|^s$ and $1+|y|^s$ and $|x-y|^{n+s}$ and $1+|y|^{n+s}$ are comparable we get
\[
|I_2|\leq C\left(g(|u(x)|)\int_{\R^n\setminus B_1(x)}\frac{dy}{|x-y|^{n+s}}+\int_{\R^n\setminus B_1(x)}g\left(\frac{|u(y)|}{1+|y|^s}\right)\frac{dy}{1+|y|^{n+s}}\right).
\]
The first term is obviously integrable and so is the second owing to \eqref{integrability}. The result follows.
\end{proof}


The following simple result shows that $\gfls$ is rotation invariant:
\begin{lem} \label{rotation}
Let $u\in C^{1,1}\cap L_g$ at $x\in\R^n$ and let $Q\in\R^{n\times n}$ be an orthogonal matrix. Define
\[
u_Q(x):=u(Qx).
\]
Then $\gfls u_Q(x)=\gfls u(Qx)$.
\end{lem}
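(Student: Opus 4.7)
The plan is to establish the identity by a straightforward orthogonal change of variables in the integral defining $\gfls u_Q(x)$, after first checking that the operator is well-defined at both $x$ and $Qx$. Well-definedness is immediate: since $u\in C^{1,1}\cap L_g$ at $Qx$ and $Q$ is an isometry, the composition $u_Q$ inherits the $C^{1,1}$ condition at $x$; and the tail integrability in $L_g$ is preserved under orthogonal changes of variables in $\R^n$. Lemma \ref{puntual} then applies.

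For the main computation, I would start from
\[
\gfls u_Q(x)=\lim_{\ve\to 0^+}\int_{\R^n\setminus B_\ve(x)}g\!\left(\frac{u(Qx)-u(Qy)}{|x-y|^s}\right)\frac{dy}{|x-y|^{n+s}},
\]
and substitute $z=Qy$. Since $Q$ is orthogonal, $|\det Q|=1$ so $dy=dz$; moreover $y=Q^T z$ and
\[
|x-y|=|x-Q^T z|=|Q(x-Q^T z)|=|Qx-z|,
\]
using that $Q$ preserves the Euclidean norm. The excluded ball $B_\ve(x)$ in the $y$-variable corresponds under $z=Qy$ to the set $\{z:|Q^T z-x|<\ve\}=B_\ve(Qx)$, so the symmetric truncation defining the principal value transforms correctly into the one centered at $Qx$. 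Putting these pieces together converts the integral into
\[
\lim_{\ve\to 0^+}\int_{\R^n\setminus B_\ve(Qx)}g\!\left(\frac{u(Qx)-u(z)}{|Qx-z|^s}\right)\frac{dz}{|Qx-z|^{n+s}},
\]
which is precisely $\gfls u(Qx)$.

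There is no real obstacle here beyond bookkeeping; the only point that warrants a line of comment is the compatibility of the change of variables with the principal value, which is handled by the observation above that $Q$ maps balls to balls of the same radius. No use of the structural assumptions on $g$ (such as \eqref{ellip} or the $\Delta_2$ condition) is needed for the identity itself — they enter only implicitly, via Lemma \ref{puntual}, to guarantee that both sides are finite.
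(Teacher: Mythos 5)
Your argument is correct and is essentially the paper's own proof: the identity follows from the orthogonal change of variables $z=Qy$, using $|\det Q|=1$, norm invariance $|x-y|=|Qx-Qy|$, and the fact that $Q$ maps $B_\ve(x)$ onto $B_\ve(Qx)$ so the principal value is preserved. Your additional remarks on well-definedness are fine but not needed beyond what Lemma \ref{puntual} already provides.
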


\begin{proof}
The proof is an immediate change of variables:
\begin{align*}
\gfls u_Q(x) & = \lim_{\ve\to 0}\int_{\R^n\setminus B_\varepsilon(x)}g\left(\frac{u(Qx)-u(Qy)}{|x-y|^s}\right)\frac{dy}{|x-y|^{n+s}} \\
			 & =\lim_{\ve\to 0}\int_{\R^n\setminus B_\varepsilon(x)}g\left(\frac{u(Qx)-u(Qy)}{|Qx-Qy|^s}\right)\frac{dy}{|Qx-Qy|^{n+s}} \\
			 & =\lim_{\ve\to 0}\int_{\R^n\setminus B_\varepsilon(Qx)}g\left(\frac{u(Qx)-u(z)}{|Qx-z|^s}\right)\frac{dz}{|Qx-z|^{n+s}}\\
			 & =\gfls u(Qx).
\end{align*}
\end{proof}

We will further need the following result concerning the fractional $g-$Laplacian of a cut-off function:
\begin{lem}\label{lem.1}
Let $\varphi\in C^\infty_c(B_1)$ be a radially symmetric function decreasing with $|x|$, then $\gfls\varphi(x)$ is well defined and
\begin{equation}\label{boundvarphi}
|\gfls\varphi(x)|\leq C
\end{equation}
for some $C$ depending of $n$, $s$ and $\|\varphi\|_{C^2(B_1)}$. Furthermore, $\gfls\varphi(x)$ is also a radial function. 
\end{lem}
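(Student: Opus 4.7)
The plan breaks into three essentially independent pieces: well-posedness, radiality, and the uniform bound. Well-posedness will be immediate, since $\varphi\in C^\infty_c(B_1)$ belongs to $C^{1,1}\cap L_g$ at every point of $\R^n$, with the constant in \eqref{c11} bounded by $\tfrac{1}{2}\|\varphi\|_{C^2(B_1)}$ uniformly in the base point; Lemma \ref{puntual} then guarantees that $\gfls\varphi(x)$ is well defined for every $x$. Radiality will follow from Lemma \ref{rotation}: if $Q$ is any orthogonal matrix, then $\varphi(Qx)=\varphi(x)$ by radiality of $\varphi$, so $\varphi_Q\equiv\varphi$ and $\gfls\varphi(Qx)=\gfls\varphi_Q(x)=\gfls\varphi(x)$, whence $\gfls\varphi$ depends only on $|x|$.

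The genuine work is the uniform $L^\infty$ bound \eqref{boundvarphi}. I would revisit the splitting used in the proof of Lemma \ref{puntual} and verify that both pieces are controlled uniformly in $x$ under our stronger hypotheses. Write $\gfls\varphi(x)=I_1(x)+I_2(x)$ where $I_1$ is the principal value integral over $B_1(x)$ and $I_2$ the integral over $\R^n\setminus B_1(x)$. For $I_1$, Taylor's theorem gives
\[
\varphi(y)=\varphi(x)+\nabla\varphi(x)\cdot(y-x)+R(x,y),\qquad |R(x,y)|\leq \tfrac{1}{2}\|\varphi\|_{C^2(B_1)}|y-x|^2,
\]
so the first-order term produces an odd integrand whose principal value over $B_1(x)$ vanishes by antisymmetry. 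Lemma \ref{lemita} then controls the contribution of $R$ exactly as in the proof of Lemma \ref{puntual}, reducing the estimate to an integral of the form $\int_{B_1(x)}|x-y|^{2(1-s)-n}\,dy$, which converges and whose value is independent of $x$; one obtains $|I_1(x)|\leq C(n,s,\|\varphi\|_{C^2(B_1)})$.

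For $I_2$, both $|\varphi(x)|$ and $|\varphi(y)|$ are dominated by $\|\varphi\|_\infty$ and $|x-y|\geq 1$, so monotonicity of $g$ together with \eqref{delta2} yields $|g(D_s\varphi(x,y))|\leq C\,g(2\|\varphi\|_\infty)$, whence
\[
|I_2(x)|\leq C\,g(2\|\varphi\|_\infty)\int_{\R^n\setminus B_1(x)}\frac{dy}{|x-y|^{n+s}}=C(n,s)\,g(2\|\varphi\|_\infty),
\]
again independent of $x$. Combining both estimates will yield \eqref{boundvarphi}. The only mildly delicate point is ensuring uniformity: this is cleanly handled because $\varphi\in C^\infty_c(B_1)$ provides global control on $\varphi$, $\nabla\varphi$ and $D^2\varphi$, upgrading the pointwise bounds hidden in Lemma \ref{puntual} to uniform ones, so I do not anticipate any serious obstacle beyond bookkeeping.
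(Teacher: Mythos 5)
Your proposal is correct, and it is worth noting where it coincides with and departs from the paper. For the uniform bound \eqref{boundvarphi} you follow essentially the paper's route: the paper simply states that the estimate ``follows by repeating the steps of the proof of Lemma \ref{puntual}'', and your write-up supplies exactly the missing bookkeeping — the Taylor remainder bounded by $\tfrac12\|\varphi\|_{C^2(B_1)}|y-x|^2$, cancellation of the odd first-order term, Lemma \ref{lemita} giving an integrand of order $|x-y|^{2(1-s)-n}$ near $x$, and for the far field the crude bound $g\bigl(2\|\varphi\|_\infty\bigr)\int_{\R^n\setminus B_1(x)}|x-y|^{-n-s}\,dy$, all with constants independent of $x$ because $\varphi$, $\nabla\varphi$ and $D^2\varphi$ are globally controlled (compact support even makes the tail estimate simpler than in Lemma \ref{puntual}, since no $L_g$ condition is needed). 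Where you genuinely diverge is the radiality: the paper passes to spherical coordinates, writes $\gfls\varphi(x)=r^{-s}\,\mathrm{p.v.}\int_0^\infty\tau^{n-1}h(\tau)\,d\tau$, and checks by an orthogonal change of variables on $\partial B_1$ that the angular integral $h$ does not depend on the direction $x'$; you instead invoke Lemma \ref{rotation} directly, observing that $\varphi_Q\equiv\varphi$ for every orthogonal $Q$, so $\gfls\varphi(Qx)=\gfls\varphi_Q(x)=\gfls\varphi(x)$. Your argument is shorter and cleaner, and it makes better use of the lemma the paper proves immediately beforehand; the paper's explicit computation buys only the (unused) extra structural information about the radial profile. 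Both are valid, so there is no gap.
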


\begin{proof}
The bound in \eqref{boundvarphi} follows simply by repeating the steps of the proof of Lemma \ref{puntual}. Let us show that $\gfls\varphi(x)$ is radial; making the change of variables to spherical coordinates and denoting $x=rx',\:y=\rho y'$ with $|x'|=|y'|=1$ we can compute
\begin{align*}
\gfls \varphi(x) & = \textrm{p.v.}\int_{\R^n}g\left(\frac{\varphi(x)-\varphi(y)}{|x-y|^s}\right)\frac{dy}{|x-y|^{n+s}} \\
			 	 & =\textrm{p.v.} \int_0^\infty\int_{\partial B_1}g\left(\frac{\varphi(r)-\varphi(\rho)}{|rx'-\rho y'|^s}\right)\frac{\rho^{n-1} dy'\:d\rho}{|rx'-\rho y'|^{n+s}} \\
				 & =\textrm{p.v.} \int_0^\infty\int_{\partial B_1}g\left(\frac{\varphi(r)-\varphi(\rho)}{r^s|x'-\frac{\rho}{r} y'|^s}\right)\frac{\rho^{n-1} dy'\:d\rho}{r^{n+s}|x'-\frac{\rho}{r} y'|^{n+s}} \\
	(\rho=r\tau)\quad & =r^{-n-s}\textrm{p.v.} \int_0^\infty\int_{\partial B_1}(r\tau)^{n-1}g\left(\frac{\varphi(r)-\varphi(r\tau)}{r^s|x'-\tau y'|^s}\right)\frac{ r dy'\:d\tau}{|x'-\tau y'|^{n+s}} \\
						& =r^{-s}\textrm{p.v.} \int_0^\infty\tau^{n-1}\left(\int_{\partial B_1}g\left(\frac{\varphi(r)-\varphi(r\tau)}{r^s|x'-\tau y'|^s}\right)\frac{ dy'}{|x'-\tau y'|^{n+s}}\right)d\tau \\
						& =r^{-s}\textrm{p.v.} \int_0^\infty\tau^{n-1}h(\tau)d\tau.
\end{align*}
It is not obvious a priori that $h$ is a function of $\tau$ alone (and not of $x'$), but if we let $z'\in\partial B_1$ and $Q$ be a orthogonal matrix such that $z'=Qx'$ by changing variables $y'=Qw'$ we have
\begin{align*}
\int_{\partial B_1}g\left(\frac{\varphi(r)-\varphi(r\tau)}{r^s|z'-\tau y'|^s}\right)\frac{ dy'}{|z'-\tau y'|^{n+s}} & =\int_{\partial B_1}g\left(\frac{\varphi(r)-\varphi(r\tau)}{r^s|Qx'-\tau Qw'|^s}\right)\frac{ dw'}{|Qx'-\tau Qw'|^{n+s}} \\
																													& =\int_{\partial B_1}g\left(\frac{\varphi(r)-\varphi(r\tau)}{r^s|x'-\tau w'|^s}\right)\frac{ dw'}{|x'-\tau w'|^{n+s}}.
\end{align*}
Therefore $h$ is indeed independent of $x'$ and $\gfls \varphi(x)$ depends only on $r$ as desired. 
\end{proof}

We end this section with a technical lemma that gives control of $\gfls u$ if we perturb it by a smooth function. 
\begin{lem}\label{lem.3}
Let $u\in C^{1,1}\cap L_g$ at $x$  and $\psi\in C_0^\infty(\R^n)$, then for all $\delta>0$ there exists $C_\delta>0$ such that 
\[
\left|\gfls(u+\varepsilon\psi)(x)-\gfls u(x)\right|\leq C_\delta\varepsilon+\omega(\delta)
\]
with $\omega$ a continuous function of $\delta$ satisfying $\omega(0)=0$.
\end{lem}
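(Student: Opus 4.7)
The natural strategy is to split the difference $\gfls(u+\varepsilon\psi)(x)-\gfls u(x)$ according to whether $|y-x|<\delta$ (near part) or $|y-x|\geq\delta$ (far part), and to show that the near part is $O(\delta^{2-2s})$ uniformly for $\varepsilon\leq 1$, while the far part is $O(C_\delta\,\varepsilon)$. Throughout I would abbreviate $A:=(u(x)-u(y))/|x-y|^s$ and $B:=\varepsilon(\psi(x)-\psi(y))/|x-y|^s$, so that the integrand in the difference is $g(A+B)-g(A)$ divided by $|x-y|^{n+s}$.

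For the near part, the idea mirrors the proof of Lemma \ref{puntual}. Using the $C^{1,1}$ hypothesis at $x$, I would write $A=L_0(y)+R_0(y)$ with $L_0(y):=-\nabla u(x)\cdot(y-x)/|x-y|^s$ odd in $y-x$ (so that $\textrm{p.v.}\int_{B_\delta(x)}g(L_0)/|x-y|^{n+s}\,dy=0$ by antisymmetry of $g$) and $|R_0(y)|\leq C|x-y|^{2-s}$. Lemma \ref{lemita} then gives $|g(A)-g(L_0)|\leq |R_0|\,g'(|L_0|+|R_0|)\leq C\,g'(C')|x-y|^{2-s}$ on $B_\delta(x)$, since both $|L_0|$ and $|R_0|$ are bounded there; integrating against $|x-y|^{-n-s}$ produces a bound of order $\delta^{2-2s}$. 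The identical argument, with $\nabla u(x)+\varepsilon\nabla\psi(x)$ and a corresponding remainder $R_\varepsilon$, gives the same bound for the $u+\varepsilon\psi$ piece whenever $\varepsilon\leq 1$, and combining the two contributions yields $\omega(\delta):=C\delta^{2-2s}$, which is continuous in $\delta$ and vanishes at $\delta=0$ since $s<1$.

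For the far part, Lemma \ref{lemita} applied directly to $g(A+B)-g(A)$ extracts the $\varepsilon$:
\[
|g(A+B)-g(A)|\leq \varepsilon\,\frac{|\psi(x)-\psi(y)|}{|x-y|^s}\,g'\!\left(\frac{|u(x)-u(y)|+\varepsilon|\psi(x)-\psi(y)|}{|x-y|^s}\right).
\]
Monotonicity of $g'$ together with its doubling property (inherited from the $\Delta_2$ condition on $g$ via \eqref{ellip}) allows me to split the argument of $g'$ into pieces involving $|u(x)|$, $|u(y)|$ and $\varepsilon\|\psi\|_\infty$; bounding $|\psi(x)-\psi(y)|\leq 2\|\psi\|_\infty$ reduces the task to controlling, uniformly in small $\varepsilon$, the three integrals
\[
\int_{|y-x|\geq\delta}g'\!\left(\tfrac{|u(x)|}{|x-y|^s}\right)\!\frac{dy}{|x-y|^{n+2s}},\ \int_{|y-x|\geq\delta}g'\!\left(\tfrac{|u(y)|}{|x-y|^s}\right)\!\frac{dy}{|x-y|^{n+2s}},\ \int_{|y-x|\geq\delta}g'\!\left(\tfrac{2\|\psi\|_\infty}{|x-y|^s}\right)\!\frac{dy}{|x-y|^{n+2s}}
\]
by a constant $C_\delta$. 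The first and third are straightforward because the arguments of $g'$ are bounded uniformly in $y$.

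The main obstacle, as I see it, is the middle integral. I would split it at $|x-y|=1$: the outer piece is handled exactly as in Lemma \ref{puntual}, invoking the comparability of $|x-y|^s$ with $1+|y|^s$ for $|x-y|\geq 1$ together with the inclusion $L_g\subset L_{g'}$ from \eqref{inclusion}; the inner piece $\delta\leq|x-y|\leq 1$ is absorbed into the $\delta$-dependent constant by bounding $g'(|u(y)|/|x-y|^s)\leq g'(|u(y)|/\delta^s)\leq C_\delta\, g'(|u(y)|)$ via the $\Delta_2$ scaling for $g'$, and then using that $g'(|u|)\in L^1_{\textrm{loc}}$. This last fact in turn follows from \eqref{ellip} (which gives $g'(t)\leq g'(1)+(p^+-1)g(t)$ for $t\geq 0$) combined with the local integrability of $g(|u|)$ implied by $u\in L_g$ and the fact that $1+|y|^s$ is bounded on $B_1(x)$. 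The resulting constant $C_\delta$ blows up as $\delta\to 0^+$, but this is exactly what the statement allows.
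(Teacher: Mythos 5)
Your proof is correct and follows essentially the same route as the paper: the same near/far splitting at radius $\delta$, the cancellation of the odd gradient term combined with Lemma \ref{lemita} on $B_\delta(x)$ (giving $\omega(\delta)\sim\delta^{2-2s}$), and the extraction of the factor $\varepsilon$ via Lemma \ref{lemita} (equivalently, the mean value theorem with $g'$ nondecreasing) on the far region. The only cosmetic difference is the treatment of the tail integral: you first split the argument of $g'$ using its doubling property and then invoke \eqref{inclusion} together with a local-integrability argument for the $|u(y)|$ piece, whereas the paper keeps the combined argument and splits the domain according to whether it exceeds $1$, passing from $g'$ to $g$ via \eqref{ellip} exactly as in the proof of \eqref{inclusion}; both variants rest on the same facts ($u\in L_g$, \eqref{ellip}, comparability of the kernels for $|x-y|\geq\delta$) and yield the same $C_\delta\varepsilon$ bound.
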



\begin{proof}
Denote $v_\ve(x):=u(x)+\ve\psi(x)$, then
\begin{align*}
\gfls v_\ve & (x)-\gfls u(x) =\\
&=\text{p.v.} \left(\int_{B_\delta^c(x)} + \int_{B_\delta(x)}  \right) \left[ g\left(\frac{v_\ve(x)-v_\ve(y)}{|x-y|^s} \right) - g\left(\frac{u(x)-u(y)}{|x-y|^s} \right)\right] \frac{dy}{|x-y|^{n+s}}\\
&:=I_1+I_2.
\end{align*}
Let us bound the first integral. Recall 
$$
g(b)-g(a)= (b-a)\int_0^1g'(a+t(b-a))\:dt=(b-a)g'(a+t_0(b-a))
$$
for some $t_0\in(0,1)$. We use this expression with $a=\frac{u(x)-u(y)}{|x-y|^s}$ and $b=\frac{v_\ve(x)-v_\ve(y)}{|x-y|^s}$ to get
$$
g(D_s v_\ve)- g(D_s u) = \ve\frac{\psi(x)-\psi(y)}{|x-y|^s}g'\left(\frac{u(x)-u(y)+t_0\ve(\psi(x)-\psi(y))}{|x-y|^s}\right).
$$
Then, 
\begin{align*}
|I_1| & \leq \int_{B_\delta^c(x)}\left|\frac{\ve(\psi(x)-\psi(y))}{|x-y|^s}g'\left(\frac{u(x)-u(y)+t_0\ve(\psi(x)-\psi(y))}{|x-y|^s}\right)\right|\frac{dy}{|x-y|^{n+s}} \\
	& \leq 2\ve\|\psi\|_\infty\int_{B_\delta^c(x)}g'\left(\frac{|u(x)|+|u(y)|+2\|\psi\|_\infty}{|x-y|^s}\right)\frac{dy}{|x-y|^{n+2s}} \\
	& \leq 2\ve C\|\psi\|_\infty\int_{B_\delta^c(x)}g'\left(\frac{|u(x)|+|u(y)|+2\|\psi\|_\infty}{1+|y|^s}\right)\frac{dy}{1+|y|^{n+2s}},
\end{align*}
where we have also used that $|x-y|^s\sim 1+|y|^s$ and $|x-y|^{n+2s}\sim 1+|y|^{n+2s}$ if $y\in B_\delta^c(x)$ (the constant $C$ depends of course on $\delta$). Proceeding as in the proof of \eqref{inclusion} and denoting 
\[
E:=\left\{y\in\R^n:\frac{|u(x)|+|u(y)|+2\|\psi\|_\infty}{1+|y|^s}\leq 1\right\}
\]
we further obtain
\[
|I_1|\leq 2\ve C\|\psi\|_\infty\left( \int_{ B_\delta^c(x)\cap E}\frac{g'(1) \,dy}{1+|y|^{n+2s}}+\int_{B_\delta^c(x)\cap E^c}g\left(\frac{|u(x)|+|u(y)|+2\|\psi\|_\infty}{1+|y|^s}\right)\frac{dy}{1+|y|^{n+2s}}\right)
\]
but
\begin{align*}
\int_{B_\delta^c(x)\cap E^c}g\left(\frac{|u(x)|+|u(y)|+2\|\psi\|_\infty}{1+|y|^s}\right)\frac{dy}{1+|y|^{n+2s}} & \leq C\left(\int_{B_\delta^c(x)\cap E^c}g\left(\frac{|u(y)|}{1+|y|^s}\right)\frac{dy}{1+|y|^{n+2s}} \right. \\ 
																												& \left.+g(|u(x)|+2\|\psi\|_\infty)\int_{B_\delta^c(x)\cap E^c}\frac{dy}{1+|y|^{n+2s}}\right).
\end{align*}
Given that the second integral is finite and so is the first (since $u\in L_g$) we obtain
\[
|I_1|\leq C_\delta\varepsilon.
\]

Now we turn to $I_2$: \eqref{c11} allows us to write
\[
v_\ve(x)-v_\ve(y)= \nabla v_\ve(x)\cdot (x-y)+O(|x-y|^2).
\]
Also, $\nabla v_\ve(x)\cdot (x-y)$ is anti-symmetric for $y\in B_\delta(x)$ and $g$ is an odd function we have
$$
\text{p.v.} \int_{B_\delta(x)} g\left(  \frac{\nabla v_\ve(x)\cdot (x-y)}{|x-y|^s}  \right)\frac{dy}{|x-y|^{n+s}}=0.
$$
Then, using Lemma \ref{lemita}
\begin{align*}
\left|\text{p.v.}\int_{B_\delta(x)} g\left( D_s v_\ve \right)  \frac{dy}{|x-y|^{n+s}} \right|&  = \left|\text{p.v.}\int_{B_\delta(x)} g\left( D_s v_\ve \right) \frac{dy}{|x-y|^{n+s}} \right. \\
																							&\left. -\text{p.v.} \int_{B_\delta(x)} g\left(  \frac{\nabla v_\ve(x)\cdot (x-y)}{|x-y|^s}  \right)\frac{dy}{|x-y|^{n+s}}\right| \\
																							&\leq  \text{p.v.} \int_{B_\delta(x)}O(|x-y|^{2-s}) g'\left(|\nabla v_\ve(x)||x-y|^{1-s} \right. \\
																							&\left.  +O(|x-y|^{2-s})\right)\frac{dy}{|x-y|^{n+s}}\\
																							&\leq  \text{p.v.}\, C_\delta \int_{B_\delta(x)}g'\left(|\nabla v_\ve(x)|\delta^{1-s}+\delta^{2-s})\right)\frac{dy}{|x-y|^{n+s-2}}\\
																							&\leq  \text{p.v.}\, C_\delta g'(C_\delta)\int_{B_\delta(x)}\frac{dy}{|x-y|^{n+s-2}}<\infty.
\end{align*}
The bound for $g\left( D_s u \right)$ is analogous and we get
\[
|I_2|\leq \text{p.v.} \, C_\delta g'(C_\delta )\int_{B_\delta(x)}\frac{dy}{|x-y|^{n+s-2}},
\]
and the conclusion of the lemma follows.
\end{proof}

\begin{rem}
It may be worth pointing out that the constant $C_\delta$ depends on other quantities besides from $\delta$; it depends on $n,s,\|\psi\|_\infty$ and on $u$ itself. However, for the purposes of our application in the proof of Theorem \ref{maxpplehyp}, the important property is that it does not depend on $\varepsilon$.
\end{rem}
\section{Proof of the maximum principle on domains, maximum principle on hyperplanes and Liouville theorem}\label{sec.max}

This section is devoted to the proof of Theorems \ref{teo0}, \ref{maxpplehyp} and \ref{liou}. The first one is rather simple owing to the nonlocal nature of the operator:

\begin{proof}[Proof of Theorem \ref{teo0}]
Suppose that the conclusion is false. Then, since $u$ is continuous in $\overline \Omega$, there exists $\bar x\in\Omega$ such that 
\[
u(\bar x)=\min_\Omega u<0.
\]

By Lemma \ref{puntual} we can evaluate point-wisely the operator, then the last claim together with the fact the $u\geq 0$ in $\R^n \setminus \Omega$ give that
\begin{align*}
\gfls u(\bar x) &=\textrm{p.v.} \left( \int_\Omega + \int_{\R^n\setminus \Omega} \right)	  g\left(\frac{u(\bar x)-u(y)}{|\bar x-y|^s}\right)\frac{dy}{|\bar x-y|^{n+s}}\\
&< 
\int_{\R^n\setminus \Omega} g\left(\frac{u(\bar x)-u(y)}{|\bar x-y|^s}\right)\frac{dy}{|\bar x-y|^{n+s}}\\
&\leq 0.
\end{align*} 
This contradicts that $\gfls u(\bar x)\geq 0$ by hypothesis, and hence we must have $u(x)\geq 0$ in $\Omega$.

If $u(\bar x)=0$ at some $\bar x\in\Omega$, then
$$
0\leq \gfls u(\bar x) = \textrm{p.v.}\int_{\R^n} g\left(\frac{-u(y)}{|\bar x-y|^s}\right)\frac{dy}{|\bar x-y|^{n+s}}\leq 0
$$
and hence the integral must be identically zero. Since $u$ is non-negative, we conclude that $u(x)\equiv 0$ in $\R^n$ and the proof concludes.
\end{proof}


Next we give the 
\begin{proof}[Proof of Theorem \ref{maxpplehyp}]

By the rotation and translation invariance of $\gfls$ we may assume that 
\[
H=\{x\in\R^n:x_1=0\}\quad \text{and}\quad \Sigma=\{x\in\R^n:x_1<0\}.
\]

By contradiction, let us suppose \eqref{eq1} is false and let 
$$
A:=\sup_\Sigma w(x)> 0.
$$
Then, for $\gamma\in(0,1)$ to be chosen later there exists $\bar x\in \Sigma$ such that 
\[
w(\bar x)\geq \gamma A.
\] 
Let $\eta\in C^\infty_c(B_1)$ be a radially symmetric, decreasing function satisfying 
\[
0\leq \eta\leq 1,\quad \eta(0)=1.
\] 
Recall that, due to Lemma \ref{lem.1},
\[
|\gfls\eta|\leq C
\]
and $\gfls\eta$ is a radial function. Let us further set 
\[
\psi(x):=\eta(x+\bar x)\quad\text{and}\quad\tpsi(x):=\eta(x-\bar x)
\]
and notice that 
\begin{align*}
& \tpsi(x)-\psi(x)\text{ is antisymmetric with respect to }H,\\
&\tpsi(x)=0\text{ in }\R^n\setminus B_1(\bar x), \\
& \psi(x)=0\text{ in }\R^n\setminus B_1(-\bar x)\:(\text{in particular in } \Sigma). 
\end{align*} 

The idea is to construct an appropriate perturbation of $w$ and use \eqref{eq.ineq} to get a contradiction. We start by choosing $\varepsilon>0$ small enough so that 
\[
w(\bar x)+\varepsilon\tpsi(\bar x)-\varepsilon\psi(\bar x)=w(\bar x)+\varepsilon\geq A
\]
and notice that by construction 
\[
w(\bar x)+\varepsilon\tpsi(\bar x)-\varepsilon\psi(\bar x)\geq w(x)+\varepsilon\tpsi(x)-\varepsilon\psi(x)\quad\forall x\in \Sigma\setminus B_1(\bar x)
\]
and therefore
\[
\max_{x\in \Sigma} (w(x)+\varepsilon\tpsi(x)-\varepsilon\psi(x))=w(\bar{x})+\varepsilon\tpsi(\bar{x})-\varepsilon\psi(\bar{x})\text{ for some }\bar{x}\in \overline{B_1(\bar x)}.
\]

We will estimate 
\begin{equation}\label{eq.diff}
\gfls(\ut+\varepsilon\tpsi)(\bar{x})-\gfls(u+\varepsilon\psi)(\bar{x})
\end{equation}
by above and below to reach a contradiction. 

We start computing
\begin{align*}
&\gfls(\ut+\varepsilon\tpsi)(\bar{x})-\gfls(u+\varepsilon\psi)(\bar{x})= \\
&= \int_{\R^n} \left[g\left(\frac{\ut(\bar{x})+\varepsilon\tpsi(\bar{x})-\ut(y)-\varepsilon\tpsi(y)}{|\bar{x}-y|^s}\right)-g\left(\frac{u(\bar{x})+\varepsilon\psi(\bar{x})-u(y)-\varepsilon\psi(y)}{|\bar{x}-y|^s}\right)\right]\frac{dy}{|\bar{x}-y|^{n+s}}.
\end{align*}
Splitting $\R^n$ as $\Sigma\cup\Sigma^c$, and performing a change of variables the expression above reads as
\begin{align*}
&\int_\Sigma \left[g\left(\frac{\ut(\bar{x})+\varepsilon\tpsi(\bar{x})-\ut(y)-\varepsilon\tpsi(y)}{|\bar{x}-y|^s}\right)-g\left(\frac{u(\bar{x})+\varepsilon\psi(\bar{x})-u(y)-\varepsilon\psi(y)}{|\bar{x}-y|^s}\right)\right]\frac{dy}{|\bar{x}-y|^{n+s}} \\ 
& +\int_\Sigma \left[g\left(\frac{\ut(\bar{x})+\varepsilon\tpsi(\bar{x})-u(y)-\varepsilon\psi(y)}{|\bar{x}+y|^s}\right)-g\left(\frac{u(\bar{x})+\varepsilon\psi(\bar{x})-\ut(y)-\varepsilon\tpsi(y)}{|\bar{x}+y|^s}\right)\right]\frac{dy}{|\bar{x}+y|^{n+s}}
\end{align*}
where we have used the definition of $\ut$ and $\tilde \psi$. We can further rewrite this as	
\begin{align*}
&\int_\Sigma \left(\frac{1}{|\bar{x}-y|^{n+s}}-\frac{1}{|\bar{x}+y|^{n+s}}\right) 
\left[  g\left(\frac{\ut(\bar{x})+\varepsilon\tpsi(\bar{x})-\ut(y)-\varepsilon\tpsi(y)}{|\bar{x}-y|^s}\right) \right. \\ 
& \hspace{5.5cm} \left. -g\left(\frac{u(\bar{x})+\varepsilon\psi(\bar{x})-u(y)-\varepsilon\psi(y)}{|\bar{x}-y|^s}\right)\right] \:dy\\
& +\int_\Sigma \left[g\left(\frac{\ut(\bar{x})+\varepsilon\tpsi(\bar{x})-\ut(y)-\varepsilon\tpsi(y)}{|\bar{x}-y|^s}\right) -g\left(\frac{u(\bar{x})+\varepsilon\psi(\bar{x})-\ut(y)-\varepsilon\tpsi(y)}{|\bar{x}+y|^s}\right)\right.\\
&\quad +\left.g\left(\frac{\ut(\bar{x})+\varepsilon\tpsi(\bar{x})-u(y)-\varepsilon\psi(y)}{|\bar{x}+y|^s}\right)-g\left(\frac{u(\bar{x})+\varepsilon\psi(\bar{x})-u(y)-\varepsilon\psi(y)}{|\bar{x}-y|^s}\right)\right]\frac{dy}{|\bar{x}+y|^{n+s}} \\
& := I_1+I_2+I_3.
\end{align*}

We bound each term separately. Notice that 
\begin{equation} \label{maior}
\frac{1}{|\bar{x}-y|^{n+s}}\geq\frac{1}{|\bar{x}+y|^{n+s}} \quad\text{for }y\in \Sigma
\end{equation}
and observe that
\begin{align*}
&0\leq (w(\bar{x})+\varepsilon\tpsi(\bar{x})- \ve \psi(\bar{x}))-(w(y)+\varepsilon\tpsi(y)-\ve\psi(y)) \\ 
&= (\ut(\bar{x})+\varepsilon\tpsi(\bar{x})-\ut(y)-\varepsilon\tpsi(y))-(u(\bar{x})+\varepsilon\psi(\bar{x})-u(y)-\varepsilon\psi(y))
\end{align*}
so the monotonicity of $g$ implies that $I_1\geq0$.

To bound $I_2$, first observe that
\begin{align*}
(\ut(\bar{x})+\varepsilon\tpsi(\bar{x}) & -\ut(y)-\varepsilon\tpsi(y)) - (u(\bar{x})+\varepsilon\psi(\bar{x})-\ut(y)-\varepsilon\tpsi(y))\\
&= w(\bar x) + \ve \tpsi(\bar x) - \ve \psi(\bar x)\geq A,
\end{align*}
therefore, by \eqref{maior} and Lemma \ref{lem.2} we get
\[
g\left(\frac{\ut(\bar{x})+\varepsilon\tpsi(\bar{x})-\ut(y)-\varepsilon\tpsi(y)}{|\bar{x}-y|^s}\right) -g\left(\frac{u(\bar{x})+\varepsilon\psi(\bar{x})-\ut(y)-\varepsilon\tpsi(y)}{|\bar{x}+y|^s}\right)\geq Cg\left(\frac{A}{|\bar{x}-y|^s}\right).
\]
and therefore
\begin{align*}
I_2 & \geq  C\int_\Sigma g\left(\frac{A}{|\bar{x}-y|^s}\right)\frac{dy}{|\bar{x}+y|^{n+s}} \\
	& =C_0g(A).	
\end{align*}
Finally, notice that the bound for $I_3$ is similar to the one for $I_2$ and we get
\begin{equation}\label{eq.2}
\text{\eqref{eq.diff}}\geq 2C_0g(A).
\end{equation}

To get an upper bound, we use Lemma \ref{lem.3} and \eqref{eq.ineq}
\begin{align*}
\text{\eqref{eq.diff}} & = \gfls(\ut+\varepsilon\tpsi)(\bar{x})-\gfls\ut(\bar{x}) \\
					   & \quad -(\gfls(u+\varepsilon\psi)(\bar{x})-\gfls u(\bar{x})) + \gfls\ut(\bar{x})-\gfls u(\bar{x}) \\
					   & \leq 2(C_\delta\varepsilon+\omega(\delta)) + \gfls\ut(\bar x) -  \gfls u(\bar x)\\
					   & \leq  2(C_\delta\varepsilon+\omega(\delta)).
\end{align*}

The last inequality, together with \eqref{eq.2} gives
\[
C_0 g(A)\leq C_\delta\varepsilon+\omega(\delta)
\]
which, taking $\delta$ such that $\omega(\delta)\leq\frac{C_0 g(A)}{2}$ gives
\[
\frac{C_0g(A)}{2C_\delta}\leq \varepsilon,
\]
but since $\varepsilon$ can be chosen as small as needed we have reached a contradiction.
\end{proof}

As mentioned in the Introduction, the previous theorem is sufficient to give the 
\begin{proof}[Proof of Theorem \ref{liou}]

Let us see that $u$ is symmetric with respect to any hyperplane, as a consequence of Theorem \ref{maxpplehyp}. Indeed, if $H$ is any hyperplane and $\Sigma$ is the semi-space on one side of $H$, we can define $w$ as in Theorem \ref{maxpplehyp} and notice that since $u$ is bounded so is $w$. Further, since $u$ is $g-$harmonic in $\R^n$,
\[
\gfls \ut(x)-\gfls u(x) = 0,
\]
in particular  if  $\ut (x)>u(x)$ for some $x\in \Sigma$. 

By Theorem \ref{maxpplehyp} $w(x)\leq 0$ in $\Sigma$. Similarly it can be proved that $w(x)\geq 0$ in $\Sigma$. Therefore $w(x)\equiv 0$ in $\Sigma$, and $u(x)$ is symmetric with respect to $H$. Since $H$ can be chosen arbitrarily, $u$ is radially symmetric about any point, giving that $u(x)\equiv C$ and concluding the proof.
\end{proof}

\section{Proof of Theorem \ref{maxpplehypbdd} and Proposition \ref{bdryest}}\label{sec.maxandbdry}

This section is dedicated to the proofs of Theorem \ref{maxpplehypbdd} and Proposition \ref{bdryest}, starting with the former:

\begin{proof}[Proof of Theorems \ref{maxpplehypbdd}]

Assume that the thesis of the theorem fails to hold, that is for some $\bar x\in \Omega$
\[
w_\lam(\bar x)=\min_{\Sigma_\lam}w=\min_{\Omega}w<0.
\] 
Let us compute, splitting $\R^n$ as $\Sigma_\lam\cup\Sigma_\lam^c$, 
\begin{align*}
\gfls u(\bar x^\lam)-\gfls u(\bar x) & = \int_{\R^n}\left[g\left(\frac{u(\bar x^\lam)-u(y^\lam)}{|\bar x-y|^s}\right)-g\left(\frac{u(\bar x)-u(y)}{|\bar x-y|^s}\right)\right]\frac{dy}{|\bar x-y|^{n+s}} \\
& = \int_{\Sigma_\lam}\left[g\left(\frac{u(\bar x^\lam)-u(y^\lam)}{|\bar x-y|^s}\right)-g\left(\frac{u(\bar x)-u(y)}{|\bar x-y|^s}\right)\right]\frac{dy}{|\bar x-y|^{n+s}}\\
& + \int_{\Sigma_\lam}\left[g\left(\frac{u(\bar x^\lam)-u(y)}{|\bar x-y^\lam|^s}\right)-g\left(\frac{u(\bar x)-u(y^\lam)}{|\bar x-y^\lam|^s}\right)\right]\frac{dy}{|\bar x-y^\lam|^{n+s}}\\
& = I_1+I_2
\end{align*}
with
\[
I_1:=\int_{\Sigma_\lam}\left[g\left(\frac{u(\bar x^\lam)-u(y^\lam)}{|\bar x-y|^s}\right)-g\left(\frac{u(\bar x)-u(y)}{|\bar x-y|^s}\right)\right]\left(\frac{1}{|\bar x-y|^{n+s}}-\frac{1}{|\bar x-y^\lambda|^{n+s}}\right)\:dy
\]
and
\begin{align*}
I_2 & := \int_{\Sigma_\lam}\left[g\left(\frac{u(\bar x^\lam)-u(y)}{|\bar x-y^\lam|^s}\right)-g\left(\frac{u(\bar x)-u(y^\lam)}{|\bar x-y^\lam|^s}\right) \right.\\	
	&+\left.g\left(\frac{u(\bar x^\lam)-u(y^\lam)}{|\bar x-y|^s}\right)-g\left(\frac{u(\bar x)-u(y)}{|\bar x-y|^s}\right)\right]\frac{dy}{|\bar x-y^\lam|^{n+s}}.
\end{align*}

Now, 
\[
\frac{1}{|\bar x-y|^{n+s}}-\frac{1}{|\bar x-y^\lambda|^{n+s}}>0
\]
while, recalling that  
\[
w_\lambda(\bar x)-w_\lambda(y)= u(\bar x^\lam)-u(y^\lam)-(u(\bar x)-u(y))\leq 0,
\]
we get
\[
g\left(\frac{u(\bar x^\lam)-u(y^\lam)}{|\bar x-y|^s}\right)-g\left(\frac{u(\bar x)-u(y)}{|\bar x-y|^s}\right)\leq 0
\]
so that 
\begin{equation}\label{eq.i1}
I_1\leq 0.
\end{equation}
For the other term we have
\[
I_2=w_\lambda(\bar x)\int_{\Sigma_\lam}\frac{g'(\xi(y))+g'(\zeta(y))}{|\bar x-y^\lambda|^{n+2s}}\:dy
\]
with 
\begin{align*}
\xi(y)\text{ between }  u(\bar x^\lam)&-u(y)\text{ and } u(\bar x)-u(y)\\
&\text{and}\\
\zeta(y)\text{ between }  u(\bar x^\lam)&-u(y^\lam)\text{ and }u(\bar x)-u(y^\lam).
\end{align*}
Since by the contradiction assumption $w_\lambda(\bar x)<0$ and $G$ is convex (which implies $g'>0$) we have 
\begin{equation}\label{eq.i2}
I_2<0.
\end{equation}

Putting together \eqref{eq.i1} and \eqref{eq.i2} we get
\begin{equation} \label{es.negativo}
\gfls u(\bar x^\lam)-\gfls u(\bar x) <0
\end{equation}
contradicting the hypothesis.

On the other hand, if $w_\lam(x)=0$ at some $x\in\Omega$, then $x$ is a minimum of $w$ in $\Omega$. Therefore, by using the hypothesis and splitting the integrals as before,
$$
0\leq \gfls u(\bar x^\lam)-\gfls u(\bar x) = I_1+I_2
$$
with $I_2=0$, and then $I_1\geq 0$. This implies that 
$$
g\left(\frac{u(\bar x^\lam)-u(y^\lam)}{|\bar x-y|^s}\right)-g\left(\frac{u(\bar x)-u(y)}{|\bar x-y|^s}\right)\geq 0.
$$
Observe that the monotonicity of $g$ implies that
$$
\left(g\left(\frac{u(\bar x^\lam)-u(y^\lam)}{|\bar x-y|^s}\right)-g\left(\frac{u(\bar x)-u(y)}{|\bar x-y|^s}\right)\right) \frac{(u(\bar x^\lam) - u(\bar x)) - (u(y)-u(y^\lam))}{|x-y|^s} \geq 0
$$
from where it is derived that
$$
(u(\bar x^\lam) - u(\bar x)) - (u(y)-u(y^\lam)) = w_\lam(x)-w_\lam(y) = -w_\lam(y)\geq 0,
$$
giving that $w_\lam(y)=0$ in $\Sigma_\lam$, and from the antisymmetry of $w_\lam$, in $\R^n$.

Finally, when $\Omega$ is unbounded, if we further assume that $w(x)\geq 0$ as $|x|\to \infty$, if it assumed that $w(x)\geq 0$ in $\Sigma_\lam$ does not hold,  a similar reasoning can be performed to reach a contradiction.
\end{proof}

%

Recall that, to prove Theorem \ref{symmball} we need Proposition \ref{bdryest} so that is the next proof we address.
 
\begin{proof}[Proof of Proposition \ref{bdryest}]
Proceeding as in the previous proof we compute
\begin{align*}
&\frac{1}{\delta_j}\left(\gfls u(x_j^{\lambda_j})-\gfls u(x_j) \right) =  \\
& =\frac{1}{\delta_j}\int_{\Sigma_{\lambda_j}}\left[g\left(\frac{u(x_j^{\lambda_j})-u(y^{\lambda_j})}{|x_j-y|^s}\right)- g\left(\frac{u(x_j)-u(y)}{|x_j-y|^s}\right)\right]\left(\frac{1}{|x_j-y|^{n+s}}-\frac{1}{|x_j-y^{\lambda_j}|^{n+s}}\right)\:dy \\
& +\frac{w_{\lambda_j}(x_j)}{\delta_j}\int_{\Sigma_{\lambda_j}}\frac{g'(\xi(y))+g'(\zeta(y))}{|x_j-y^{\lambda_j}|^{n+2s}}\:dy.
\end{align*}
Recall that
\begin{equation}\label{bdry1}
\frac{w_{\lambda_j}(x_j)}{\delta_j}\int_{\Sigma_{\lambda_j}}\frac{g'(\xi(y))+g'(\zeta(y))}{|x_j-y^{\lambda_j}|^{n+2s}}\:dy\leq 0
\end{equation} 
since $w_{\lambda_j}(x_j)\leq0$ and $g'>0$. 

For the first term, we first note that
\[
\lim_{j\rightarrow\infty}u(x_j^{\lambda_j})-u(y^{\lambda_j})-u(x_j)-u(y)=\lim_{j\rightarrow\infty}w_{\lambda_j}(x_j)-w_{\lambda_j}(y)=w_{\lambda_0}(\bar x)-w_{\lambda_0}(y)<0   
\]
and therefore
\begin{equation}\label{bdry2}
\lim_{j\rightarrow\infty}g\left(\frac{u(x_j^{\lambda_j})-u(x_j)}{|\bar x-y|^s}\right)- g\left(\frac{u(x_j)-u(y)}{|\bar x-y|^s}\right)<0.
\end{equation}
Also, 
\[
\frac{1}{|x_j-y|^{n+s}}-\frac{1}{|x_j-y^{\lambda_j}|^{n+s}} = \frac{-(n+s)}{2|\eta(y)|^{n+s+2}}\left(|x_j-y|-|x_j-y^{\lambda_j}|\right)
\]
and so
\begin{equation}\label{bdry3}
\lim_{j\rightarrow\infty}\frac{1}{\delta_j}\left(\frac{1}{|x_j-y|^{n+s}}-\frac{1}{|x_j-y^{\lambda_j}|^{n+s}}\right)>0.
\end{equation}

Gathering \eqref{bdry2}-\eqref{bdry3} with \eqref{bdry1} and taking $\limsup$ gives
\[
\limsup_{j\rightarrow\infty}\frac{1}{\delta_j}\left(\gfls u(x_j^{\lambda_j})-\gfls u(x_j) \right)<0
\]
as desired.
\end{proof}

\section{Symmetry results}\label{sec.symm}

Next we give present the proofs of our symmetry results, Theorems \ref{symmball} and \ref{symmwhole1}:

\begin{proof}[Proof of Theorem \ref{symmball}]

As mentioned, the proof follows the scheme of the moving planes method. Let us set 
\[
\Omega_\lambda:=\Sigma_\lambda\cap B.
\]

The first step is to show that for $\lambda>-1$ but sufficiently close to $-1$ we have $w_\lambda\geq 0$ in $\Omega_\lambda$. Let us assume for the sake of contradiction that this is not the case. Then
\[
w_\lambda(\bar x)=\min_{\Omega_\lambda}w_\lambda<0
\]
($\lambda$ will be suitably chosen later). 

On one hand, \eqref{eqball} gives
\[
\gfls u(\bar x^\lam)-\gfls u(\bar x)= f(u(\bar x^\lam))-f(u(\bar x))=f'(\xi)w_\lambda(\bar x) 
\]
for some $\xi$ that lies between $u(\bar x^\lam)$ and $u(\bar x)$. Since $f'$ is nondecreasing and by hypothesis $w_\lambda(\bar x)<0$ this turns into
\begin{equation}\label{diferencia1}
\gfls u(\bar x^\lam)-\gfls u(\bar x) \geq f'(u(\bar x))w_\lambda(\bar x).
\end{equation}

On the other hand, we can proceed as in the proof of Theorem \ref{maxpplehypbdd} to get
\begin{equation*}
\gfls u(\bar x^\lam)-\gfls u(\bar x) \leq w_{\lambda}(\bar x)\int_{\Sigma_{\lambda}}\frac{g'(\xi(y))+g'(\zeta(y))}{|\bar x-y^{\lambda}|^{n+2s}}\:dy=:w_{\lambda}(\bar x)I
\end{equation*}
with 
\begin{align*}
u(\bar{x}^\lam)-u(y^\lam)<  \xi(y)  < u(\bar{x})-u(y^\lam) \\
u(\bar{x}^\lam)-u(y) <  \zeta(y)  <u(\bar{x})-u(y).
\end{align*}
We will use Lemma \ref{desig} to bound $I$ by below:
\begin{align*}
I & \geq \int_{\Sigma_{\lambda}}\frac{ g'(C_0\max\{|u(\bar{x}^\lam)-u(y^\lam)|,|u(\bar{x})-u(y^\lam)|\})+g'(C_0\max\{|u(\bar{x}^\lam)-u(y)|,|u(\bar{x})-u(y)|\})}{|\bar x-y^{\lambda}|^{n+2s}}\:dy \\
&\geq \int_{\Sigma_\lam\setminus\Omega_\lam} \frac{g'(C_0\max\{|u(\bar{x}^\lam)|,|u(\bar{x})|\})}{|\bar x-y^{\lambda}|^{n+2s}}\:dy\\
&\geq \int_{\Sigma_\lam\setminus\Omega_\lam} \frac{g'(C_0|u(\bar{x})|)}{|\bar x-y^{\lambda}|^{n+2s}}\:dy\\
&\geq \int_{\Omega_{\lam+1}\setminus\Omega_\lam} \frac{g'(C_0|u(\bar{x})|)}{|\bar x-y^{\lambda}|^{n+2s}}\:dy\\
&\geq  \frac{g'(C_0|u(\bar{x})|)}{|1+\lam|^{2s}}
\end{align*}
where we have used that $u(y)=0$ in $\Sigma_\lam\setminus \Omega_\lam$ and $w_\lam(\bar x)=u(\bar x^\lam) - u(\bar x)<0$.

This together with \eqref{diferencia1} give 
\[
0\leq \left(\frac{g'(C_0u(\bar x))}{|\lambda+1|^{2s}}-f'(u(\bar x))\right) w_\lambda(\bar x)
\]
but thanks to \eqref{grwothf} (and the fact that $C_0\leq 1$) we know that   
$$
f'(u(\bar x))\geq f'(C_0 u(\bar x))\geq \frac{1}{C} g'(C_0u(\bar x))
$$
so that 
\[
0\leq \left(\frac{g'(C_0u(\bar x))}{|\lambda+1|^{2s}}-f'(u(\bar x))\right) w_\lambda(\bar x)\leq \left(\frac{1}{C|\lambda+1|^{2s}}-1\right) g'(C_0u(\bar x))w_\lambda(\bar x)
\]
and we can choose $\lambda$ sufficiently close to $-1$ so that   
\[
\left(\frac{1}{C|\lambda+1|^{2s}}-1\right)>0
\]
and we arrived to a contradiction.

Therefore, $w_\lambda\geq 0$ in $\Omega_\lambda$ for some $\lambda>-1$; next we want to show that $\lam_0=0$. Assume the contrary; then the Maximum Principle \ref{teo0} implies that 
\begin{equation}\label{wlam}
w_{\lambda_0}>0\quad\text{ in }\Omega_{\lambda_0}.
\end{equation}
By definition of supremum, the exists a sequence $\lambda_j$ such that 
\[
\lambda_j\leq \lambda_{j-1},\quad\lambda_j\leq 0,\quad \lim_{j\rightarrow\infty}\lambda_j=\lambda_0
\]
and 
\[
w_{\lambda_j}(x_j)=\min_{\Omega_{\lambda_j}}w_{\lambda_j}<0
\]
for some $x_j\in\Omega_{\lambda_j}$. We may assume further (up to taking a subsequence if needed) that 
\[
\lim_{j\rightarrow\infty}x_j=x_0\text{ and }w_{\lambda_0}(x_0)\leq 0
\]
which owing to \eqref{wlam} implies $x_0\in T_{\lam_0}$. 

Further, setting $\delta_j:=\textrm{dist}(x_j,T_{\lam_j})=|x_j-z_j|$ for some $z_j\in T_{\lambda_j}$ the equation gives
\[
\frac{1}{\delta_j}\left(\gfls u(x^{\lambda_j}_j)-\gfls u(x_j)\right)=f'(\xi_j)\frac{w_{\lambda_j}(x_j)}{\delta_j}=f'(\xi_j)\frac{w_{\lambda_j}(x_j)}{|x_j-z_j|}.
\]
Notice that $w_{\lambda_j}\equiv0$ on $T_{\lambda_j}$ so
\[
\lim_{j\rightarrow\infty}\frac{w_{\lambda_j}(x_j)}{|z_j-x_j|}=\lim_{j\rightarrow\infty}\frac{w_{\lambda_j}(x_j)-w_{\lambda_j}(z_j)}{|z_j-x_j|}=\lim_{j\rightarrow\infty}\frac{\nabla w_{\lambda_j}(z_j)\cdot(z_j-x_j)+o(|z_j-x_j|)}{|z_j-x_j|}=0
\]
which, together with the previous line implies 
\[
\lim_{j\rightarrow\infty}\frac{1}{\delta_j}\left(\gfls u(x^{\lambda_j}_j)-\gfls u(x_j)\right)=0
\]
(notice that $\xi_j$ is a bounded sequence and hence so is $f'(\xi_j)$).

But Proposition \ref{bdryest} gives
\[
\limsup_{j\rightarrow\infty}\frac{1}{\delta_j}\left(\gfls u(x^{\lambda_j}_j)-\gfls u(x_j)\right)<0,
\]
a contradiction.

Therefore, $\lam_0=0$; since we can choose the opposite direction to reflect we get the symmetry of $u$ about the $x_n$ direction. Finally, the rotation invariance of $\gfls$ implies that we can repeat the argument in any direction, so $u$ is symmetric about the origin. 
\end{proof}

\begin{proof}[Proof of Theorem \ref{symmwhole1}]
We split the proof in the two steps in order to apply the moving planes method.

\textit{Step 1.} Let us see that for $\lam$ sufficiently negative, $w_\lam(x)\geq 0$ for all $x\in \Sigma_\lam$.

Let us assume the opposite and obtain a contradiction. Due to the decay condition \eqref{cond-decay} on $u$,  there exists $\bar x\in\Sigma_\lam$ such that $w_\lam(\bar x)=\min_{\Sigma_\lam} w_\lam <0$.

Moreover,  from \eqref{eqrn} we get that
\begin{equation} \label{equacion}
\gfls u(x^\lambda)-\gfls u(x)= f(u(x^\lam))-f(u(x))=f'(\xi)w_\lambda(x) 
\end{equation}
where  $\xi$ lies between $u(x^\lam)$ and $u(x)$.  In particular, we have
$$
u(\bar x^\lam) \leq \xi(\bar x)\leq u(\bar x).
$$                                                 
Because of the decay assumption on $u$, for $\lam$ sufficiently negative, $u(\bar x)$ is small, and then $\xi(\bar x)$ is small, giving that $f'(\xi(\bar x))\leq 0$ due to \eqref{cond-f}. As a consequence
$$
\gfls u(x^\lam)-\gfls u(x)= f(u(x^\lam))-f(u(x))\geq 0. 
$$
However, as seen in \eqref{es.negativo}, under these conditions we have that
$$
\gfls u(x^\lam)-\gfls u(x)= f(u(x^\lam))-f(u(x))< 0,
$$
which is a contradiction. Therefore $w_\lam(x)\geq 0$ for all $x\in \Sigma_\lam$ for $\lam$ sufficiently negative.

\textit{Step 2}. Define $\lam_0 =\sup\{\lam\colon w_\mu(x)\geq 0, x\in \Sigma_\mu, \mu\leq \lam\}$.

Let us see that $u$ is symmetric about the limiting plane $T_{\lam_0}$, or
\begin{equation} \label{aprobar}
w_{\lam_0}(x)\equiv 0, \quad x\in \Sigma_{\lam_0}.
\end{equation}

The proof of this fact runs similarly as the second step of the proof of Theorem \ref{symmball}: suppose that \eqref{aprobar} does not hold, then by Theorem \ref{maxpplehypbdd}
$$
w_{\lam_0}(x)>0,\quad \forall x\in \Sigma_{\lam_0}.
$$

Observe that, by definition of $\lam_0$, there is a sequence $\lam_j\searrow \lam_0$ and $x_j \in \Sigma_{\lam_j}$ such that
\begin{equation} \label{eq-min}
w_{\lam_j}(x_j)=\min_{\Sigma_{\lam_j}}w_{\lam_j}<0, \quad \text{and}\quad \nabla w_{\lam_j}(x_j)=0.
\end{equation}

From condition \eqref{cond-f} we can guarantee that, up to a subsequence, $\{x_j\}_{k\in\N}$ converges to some $\bar x$. In fact, using \eqref{cond-decay}, if $|x_j|$ is sufficiently large we have that $u(x_j)$ is small and hence $\xi_{\lam_j}(x_j)$ is small, which implies that $f'(\xi_{\lam_j}(x_j))\leq 0$ due to \eqref{cond-f}. This analysis yields
\begin{align*}
\gfls u(x_j^{\lam_j})-\gfls u(x_j)&= f(u( x_j^{\lam_j}))-f(u(x_j))\\
&=f'(\xi_{\lam_j}(x_j)) w_{\lam_j}(x_j)\geq 0,
\end{align*}
which contradicts that fact that $x_j$ is a negative minimum of $w_{\lam_j}$ since  by the analysis derived in \eqref{es.negativo} we should have $\gfls u(x_j^{\lam_j})-\gfls u(x_j)<0$. Therefore, $\{x_j\}_{j\in\N}$ must be bounded.

Finally, from \eqref{eq-min}, $w_{\lam_0}(\bar x)\leq 0$ and then $\bar x\in T_{\lam_0}$, and $\nabla w_{\lam_0}(\bar x)=0$. Then, as in Theorem \ref{symmball} we deduce that
$$
\lim_{j\to\infty} \frac{w_{\lam_j} (x_j)}{\delta_j}=0, \qquad \delta_j:=\textrm{dist}(x_j,\Sigma_{\lam_j})
$$
which, in light of \eqref{equacion}, contradicts Proposition \ref{bdryest} and gives the result.
\end{proof}

\section{Extensions and applications}\label{sec.ext}

In this section we present some extensions, applications and further discussions of our results that we consider to be of interest. We start by pointing out that when the Young function $G$ is given by a power, i.e. $G(t)=\frac{t^p}{p},\:p\geq2$ we recover the known results for the fractional $p-$Laplacian, thus our results can be consider a generalization or extension of these to the nonhomogeneous scenario. Furthermore, as mentioned in the Introduction our setting allows for more general growth conditions such as $G(t)=t^p\log(1+t)$. 

Another special type of Young function that falls into the category studied here is 
\[
G(t)=t^p+t^q
\]
for $q>p\geq 2$. This structure is closely related to a special type of problems referred to as \emph{double phase variational problems} where the aim is to study minimizers of the functional
\begin{equation}\label{eq.double}
I(u)=\int_{\Omega}\left(|\nabla u|^p+a(x)|\nabla u|^q\right)\:dx;
\end{equation}
these have attracted much interest in the PDE community since the seminal work of Colombo and Mingione \cite{colombo2015regularity}. Our work is a step in the direction of obtaining qualitative properties of solution of the fractional analog of \eqref{eq.double}.

In a different direction, it is worth to mention that all the results stated in this manuscript hold true for a more general operators of the form
\[
\mathcal{L}_{g,s} (u):=\textrm{p.v.}\int_{\R^n}g\left(\frac{u(x)-u(y)}{k(|x-y|)}\right)\frac{dy}{k(|x-y|)|x-y|^n}
\]
where, for fixed constants $c_1, c_2>0$, $k\colon \R^n\times\R^n\times\R \to \R$ is such that
$$
c_1t^s \leq k(t)\leq c_2 t^s
$$
for any $t\geq 0$.

Finally, we would like to point out that the study of the qualitative properties discussed in this manuscript appears to be lacking in the literature for the \emph{local} case, that is operators of the form
\[
\text{div}\left(g(|\nabla u|)\frac{\nabla u}{|\nabla u|}\right).
\]
In that regard, a rather intriguing question is whether such local results could be recovered as a limit as $s\rightarrow 1^+$.

\subsection*{Acknowledgements.} This work was partially supported by Consejo Nacional de Investigaciones Cient\'ificas y T\'ecnicas. (CONICET).

\end{document}